\newcommand{\bv}[1]{{\mbox{\boldmath $ #1$}}}
\def\a{\bv{a}}
\def\v{\bv{v}}
\def\cb{\bv{c}}
\def\db{\bv{d}}
\def\cbar{\bv{\bar{c}}}
\def\diag{{\rm diag}}
\def\sign{{\rm sgn}}
\def\Real{\mathbb{R}}  
\newcommand{\lbeq}[1]{{\label{OR:eq:#1}}}
\newcommand{\eq}[1]{{(\ref{OR:eq:#1})}}
\newcommand{\eqtwo}[2]{{(\ref{OR:eq:#1}, \ref{OR:eq:#2})}}
\newcommand{\be}[1]{\begin{equation} \lbeq{#1}}
\newcommand{\ee}{\end{equation}}
\newcommand{\Vector}[1]{{\left(\begin{matrix} #1 \end{matrix}\right)}}
\newtheorem{remark}{Remark}
\newtheorem{proposition}{Proposition}
\newtheorem{theorem}{Theorem}
\newtheorem{definition}{Definition}
\newtheorem{lemma}{Lemma}
\newtheorem{algorithm}{Algorithm}
\newtheorem{corollary}{Corollary}
\newenvironment{proof}{\textit{Proof:}\ }{$~\Box$}
\def\Deg{{\rm Deg}}
\def\Dmin{D_{\rm min}}
\def\Dmax{D_{\rm max}}
\def\Span{{\rm span}}
\def\Rank{{\rm rank}\;}
\title{Existence, uniqueness and a constructive solution algorithm for a class of finite Markov moment problems}
\author{
Laurent Gosse\footnote{IAC--CNR ``Mauro Picone" (sezione di Bari), Via Amendola 122/D, 70126 Bari (Italy)
{\tt l.gosse@ba.iac.cnr.it}}
 \and
Olof Runborg\footnote{CSC, KTH, 10044 Stockholm (Sweden) {\tt olofr@nada.kth.se}}
}
\date{\today}
\begin{document}

\maketitle

\begin{abstract}
We consider a class of finite Markov moment problems with arbitrary
number of positive and negative branches. We show criteria for the existence
and uniqueness of solutions, and we characterize in detail the
non-unique solution families. Moreover, we present a constructive algorithm
to solve the moment problems numerically and prove that the algorithm
computes the right solution.
\end{abstract}

\section{Introduction}

We aim at inverting a moment system often associated with the prestigious name of Markov.
The original form of the problem is the following.
Given a {\em finite set of moments} $m_k$ for $k=1,\ldots,K$, find a 
bounded measurable
{\em density} function $f$ satisfying 
\be{integrel}
   m_k = \int_{\Real} x^{k-1}f(x)dx, \qquad 0\leq f\leq 1,
\qquad k=1,\ldots, K.
\ee
Condition for the existence of solutions $f(x)$ to this problem
is classical \cite{akhiezer,ak-krein}. In general solutions are not
unique, unless more conditions are given, e.g. based on entropy minimization
\cite{poly,brecor} or $L^{\infty}$-minimization \cite{norris,lewis}.
A typical result is that the unique solution for even $K$ is 
piecewise constant, taking values in $\{0,1\}$. More precisely, 
if $K=2n$ then $f$ is of the form
\be{fMarkov}
   f(x) = \sum_{j=1}^n \chi_{[y_i,x_i]}(x)
\ee
where $\chi_I(x)$ is the characteristic function for the interval $I$
and 
\be{interlace}
   y_1<x_1<y_2<x_2<\cdots<y_n<x_n.
\ee
See Theorem \ref{markov} below in Section \ref{sec:exist}
and
consult e.g. \cite{CF,diaco,krein,simone,talenti} for general background 
on moment problems. 

A reduced form of the finite moment problem 
is to search for solutions to \eq{integrel} which are precisely of
the form \eqtwo{fMarkov}{interlace}.
One then obtains an algebraic problem for the branch values,
\be{mom-olof}
  m_k = \frac1k\sum_{j=1}^{n} x_{j}^k-y_{j}^k,
\qquad k=1,\ldots, K=2n.
\ee
Finding $\{x_j\}$ and $\{y_j\}$ from $\{m_k\}$ is
an ill-conditioned problem when the branch values of the solution
come close to each 
other; the Jacobian of the problem 
is a Vandermonde matrix and iterative numerical 
resolution routines require extremely good starting guesses when
the matrix degenerates. 
For less than four moments a direct method based on
solving polynomial equations was presented in \cite{olof1}. Routines
based on the Simplex algorithm have been proposed in \cite{norris}.
Another algorithm was
presented by Koborov, Sklyar and Fardigola
in \cite{kobo1,sklyar1} in the slightly modified setting
where $f$ takes values in $\{-1,1\}$ instead of $\{0,1\}$.
It consists of
solving a sequence of high degree polynomial equations,
constructed through a rather intricate process with unclear stability
properties. In \cite{GO} we showed that this algorithm 
can be drastically simplified and adapted to \eq{mom-olof}.
Later, in \cite{GO2}, we also gave a direct proof that the simplified algorithm indeed computes the correct solution, relying on the classical 
Newton's identities and Toeplitz matrix theory.

The moment problem has many applications in for instance
probability and statistics \cite{GL,persil}, but also in areas like
wave modulation \cite{ieee,sez}
and ``shape from moments" inverse problems \cite{GM}.
Our own motivation comes from a quite different
field, namely multiphase geometrical optics \cite{poly,brecor,G,GJL,GO,olof1}. 
In this
application one needs to solve a system of nonlinear hyperbolic conservation
laws. To evaluate the flux function
in the partial differential equations (PDEs) a system like \eq{mom-olof} 
must be solved. In a finite difference method this means that
the system must be inverted once for every point in the computational
grid, repeatedly in every timestep.
It is thus important that the inversion can be done fast and accurately;
this difficulty has been a bottleneck in computations.
In \cite{GO} we used the simplified algorithm 
mentioned above for numerical implementation inside a 
shock-capturing finite difference solver.
It is our aim here to develop better algorithms and understanding to
open the way for the processing of intricate wave-fields with large $K$,
and thus complement the seminal paper \cite{brecor} where
the multiphase geometrical optics PDEs were first proposed.

In this paper we are concerned with a generalization of
\eq{mom-olof}. In the geometrical optics application,
the number of moments $K$ is typically not even and
one can have a variable number of positive ($x_k$) and negative
($y_k$) branches. We thus consider the following problem
\be{mom-olof2}
  m_k = \sum_{j=1}^{n_x} x_{j}^k- \sum_{j=1}^{n_y} y_{j}^k,
\qquad k=1,\ldots, K,
\ee
where $n_x+n_y=K$ but where $n_x$ and $n_y$ are not necessarily equal.
We study
existence and uniqueness of solutions to this problem
(Theorem \ref{existence}). 
In particular we are interested in how and when uniqueness is
lost. For these cases we characterize the family of
solutions that exists. The reason is to understand what
happens numerically close to degenerate solutions, which
is an important feature in the application we have in mind:
In the exact solution to the multiphase geometrical optics PDEs the
moment problem is typically degenerate for large domains;
the numerical approximation is almost degenerate.

We also give constructive algorithms to solve
 \eq{mom-olof2} and prove that they generate the right
solution (Theorem \ref{solvalgo}).
In a future paper we will study the numerical stability
of these algorithms. 
Experimentally we note, for instance,
that to compute the next moment,
Algorithm 3 is much more stable than Algorithm 1.
The difficulty lies in understanding
perturbations around degenerate solutions, which is where
the algorithms are most unstable.  
For this the insights of this paper will be
of importance. 
\begin{remark}
  The problem \eq{mom-olof2} can be cast in the form of
\eq{integrel} if one demands that the density function
$f(x)$ is of the form
\be{fWe}
    f(x) = \sum_{j=1}^{n_x}\sign(x_j)\left[H(x)-H(x-|x_j|)\right]
- \sum_{j=1}^{n_y}\sign(y_j)\left[H(x)-H(x-|y_j|)\right],
\ee
and we rescale the moments $m_k\to km_k$. For 
the case 
$n_x=n_y=n$ and $K=2n$ 
with interlaced branch values \eq{interlace}
this reduces to \eq{fMarkov}.
\end{remark}

This paper is organized as follows.
In Section \ref{sec:algo} we present the algorithms
for solving \eq{mom-olof2}. Notation and various ways
of describing a solution is subsequently
introduced in Section \ref{sec:prelim}.
 Next we derive conditions
for existence and uniqueness of solutions in Section \ref{sec:exist}
and also discuss various properties of the solution, in particular
when it is not unique. A theorem proving the correctness of
the algorithms is proved in Section \ref{sec:T1proof}.
Finally, in Section \ref{sec:Markov},
we give additional properties of the elements of our algorithms, and
use these to relate our results back to the classical
Markov theory.

\section{Algorithms}\label{sec:algo}

In this section we detail the algorithms that we propose for solving
\eq{mom-olof2}. 
The solution that we obtain is what we call the {\em minimal
degree solution}, meaning that when the solution is not unique
as many branch values as possible are zero. See Section
\ref{sec:exist} for a precise definition.
The algorithms goes as
follows; they  may fail in case there is no solution to \eq{mom-olof2}.
\begin{algorithm}[Computing $\{x_j\}$ and $\{y_j\}$]\label{algo1}
\mbox{}
\begin{enumerate}
\item Construct the sequence $\{a_k\}$ as follows.
Set $a_0=1$ and $a_k=0$ for $k<0$. For $1\leq k \leq K$,
let the elements be given as the solution to
\be{adef}
   \Vector{1 & & & \\
-m_1 & 2 &\\
\vdots &\ddots &\ddots & \\
-m_{K-1} & \hdots & -m_1 & K
}
\Vector{a_1 \\ a_2 \\ \vdots \\ a_K} =
\Vector{m_1 \\ m_2 \\ \vdots \\ m_K}.
\ee

\item Construct the matrix $A_1\in\Real^{n_x\times n_x}$ as
$$ 
   A_1=\Vector{a_{{n_y}} & a_{n_y-1} & \hdots & a_{n_y-n_x+1} \\
 a_{n_y+1} & a_{n_y} & \hdots & a_{n_y-n_x+2}\\
            \vdots & \vdots & \ddots & \vdots \\
a_{n_y+n_x-1}
 & a_{n_y+n_x-2} & \hdots & a_{n_y}
}.
$$
Compute the rank of $A_1$. Let $\tilde{n}_x=\Rank A_1$
and $\tilde{n}_y = n_y - n_x +\tilde{n}_x$.

\item Construct the matrices $\tilde{A}_0,\tilde{A}_1\in\Real^{\tilde{n}_x\times \tilde{n}_x}$ as
$$ 
   \tilde{A}_0=\Vector{a_{{\tilde{n}_y+1}} & a_{\tilde{n}_y} & \hdots & a_{\tilde{n}_y-\tilde{n}_x+2} \\
 a_{\tilde{n}_y+2} & a_{\tilde{n}_y+1} & \hdots & a_{\tilde{n}_y-\tilde{n}_x+3}\\
            \vdots & \vdots & \ddots & \vdots \\
a_{\tilde{n}_y+\tilde{n}_x}
 & a_{\tilde{n}_y+\tilde{n}_x-1} & \hdots & a_{\tilde{n}_y+1}
},
$$
$$
   \tilde{A}_1=\Vector{a_{{\tilde{n}_y}} & a_{\tilde{n}_y-1} & \hdots & a_{\tilde{n}_y-\tilde{n}_x+1} \\
 a_{\tilde{n}_y+1} & a_{\tilde{n}_y} & \hdots & a_{\tilde{n}_y-\tilde{n}_x+1+1}\\
            \vdots & \vdots & \ddots & \vdots \\
a_{\tilde{n}_y+\tilde{n}_x-1}
 & a_{\tilde{n}_y+\tilde{n}_x-2} & \hdots & a_{\tilde{n}_y}
}.
$$
\item Solve the generalized eigenvalue problem 
\be{geneig}
\tilde{A}_0\v = x \tilde{A}_1\v,
\ee
to get the $\{x_j\}$ values of the minimal degree solution
to \eq{mom-olof2}.
\item To compute the $\{y_j\}$ values, the same process is used with $m_k$ replaced by $-m_k$
and the roles of $n_x$ and $n_y$ interchanged.
\end{enumerate}
\end{algorithm}
An alternative to Algorithm \ref{algo1} is as follows:
\begin{algorithm}[Computing $\{x_j\}$ and $\{y_j\}$]\label{algo2}
\mbox{}
\begin{enumerate}
 \item Construct the matrices $\tilde{A}_0$ and $\tilde{A}_1$ as in steps 1-3 in Algorithm \ref{algo1}.
\item Denote the first column vector in $\tilde{A}_0$ by $\tilde{\a}_0$ by 
and solve
\be{cprimdef}
   \tilde{A}_1\cb'=-\tilde{\a}_0, \qquad \cb'=(c_1,c_2,\ldots, c_{\tilde{n}_x})^T.
\ee
\item Construct the polynomial 
$$
   P(z) = c_{\tilde{n}_x} + c_{\tilde{n}_x-1}z + \cdots + 
c_{1}z^{\tilde{n}_x-1} + z^{\tilde{n}_x}.
$$
The roots of $P(z)$ are
the $\{x_j\}$ values of
the minimal degree solution to \eq{mom-olof2}
(possibly together with some zeros).
\item To compute the $\{y_j\}$ values, the same process is used with $m_k$ replaced by $-m_k$
and the roles of $n_x$ and $n_y$ interchanged.
\end{enumerate}
\end{algorithm}
\begin{remark}
We note that the values of $a_k$ in the definition \eq{adef} are independent
of $K$, since the system matrix is triangular. We therefore consider
the sequence without reference to $K$ in any other respect than the
fact that we are only able to compute elements with $k\leq K$ 
when we are given $K$ moments. The largest index of the $a_k$-sequence
appearing
in the matrix $A_1$ is $n_y+n_x-1<K$. In the matrices $\tilde{A}_0,\tilde{A}_1$ 
it is $\tilde{n}_y+\tilde{n}_x=n_y - n_x +2\tilde{n}_x\leq n_y+n_x=K$.
Hence all three matrices can be constructed from the first $K$ moments.
Some properties of the $A_1$ matrix are detailed in 
Section \ref{sec:Markov}.
\end{remark}

Sometimes one is not interested in finding the individual
$\{x_j\}$ and $\{y_j\}$ branch values but just wants the higher moments,
defined as 
\be{highmom}
  m_k = 
\sum_{j=1}^{n_x} x_{j}^{k}- \sum_{j=1}^{n_y} y_{j}^{k},
\ee
but now for $k>K$, {\em given} a solution $\{x_j\}\cup\{y_j\}$ to
\eq{mom-olof2}. (That this is well-defined is shown later
in Theorem \ref{existence}.) For this case there is another algorithm,
which has empirically proven to be
more stable than first computing $\{x_j\}$ and
$\{y_j\}$ from Algorithm \ref{algo1} or \ref{algo2},
and then entering the values into \eq{highmom}. We stress that this is
precisely what is needed in order to compute $K$-multivalued solutions
of the inviscid Burger's equation in geometrical optics,
following the ideas of \cite{brecor}.
\begin{algorithm}[Computing $m_{K+1}$]\label{algo3}
\mbox{}
\begin{enumerate}
\item Construct the $A_1$ matrix as in steps 1-2 of Algorithm \ref{algo1}.
\item Let
$$ 
\a_0 = (a_{{{n}_y+1}} , a_{{n}_y+2} , \ldots , a_{{n}_y+{n}_x})^T
\in\Real^{n_x},
$$
and let $\cbar=(c_1,c_2,\ldots, c_{{n}_x})^T$ be one solution to
\be{cbardef}
   {A}_1\cbar=-{\a}_0.
\ee

\item The next moment is given by
$$
  m_{K+1}  
 = -(K+1)\sum_{j=1}^{n_x}c_ja_{K+1-j} - 
\sum_{j=1}^{K}m_ja_{K+1-j}.
$$
\end{enumerate}
\end{algorithm}

We recall that Algorithm \ref{algo1} has been shown to be numerically
efficient in the paper \cite{GO}.
The justification of these algorithms is given in
Section \ref{sec:T1proof} where we
show the following theorem:
\begin{theorem}\label{solvalgo}
If a solution to \eq{mom-olof2} exists then:
\begin{itemize}
\item[\rm (i)]
In Algorithm \ref{algo1}, the matrix $\tilde{A}_1$ is
non-singular. The generalized eigenvalue problem in
\eq{geneig} is well-defined and 
the generalized eigenvalues (counting algebraic multiplicity)
are the $\{x_j\}$-values of the minimal degree solution to \eq{mom-olof2}
plus $\tilde{n}_x-\Dmin$ zeros. (See \eq{Dmin} for the definition of $\Dmin$.)
\item[\rm (ii)]
In Algorithm \ref{algo2}, $\cb'$ is well defined,
\be{charpol}
   P(z) = \det(zI-\tilde{A}_1^{-1}\tilde{A}_0)
\ee
and the roots of $P(z)$ are
the $\{x_j\}$-values of the minimal degree solution to \eq{mom-olof2}
plus $\tilde{n}_x-\Dmin$ zeros.
\item[\rm (iii)]
In Algorithm \ref{algo3}, the computed moment satisfies
$$
  m_{K+1} = \sum_{j=1}^{n_x} x_{j}^{K+1}- \sum_{j=1}^{n_y} y_{j}^{K+1},
$$
for all solutions $\{x_j\}\cup \{y_j\}$ to \eq{mom-olof2}.
\end{itemize}
\end{theorem}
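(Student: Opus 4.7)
The plan is to interpret everything through the generating function $A(z):=\sum_{k\ge 0}a_k z^k$. Rewriting \eq{adef} as $k a_k=\sum_{i=1}^{k}m_i a_{k-i}$ and summing against $z^k$ gives the ODE $z A'(z)=z M(z)A(z)$ with $M(z)=\sum_{k\ge 1}m_k z^{k-1}$; integrating and using $m_k=\sum x_j^k-\sum y_j^k$ to recognise $\int M(z)\,dz$ as a sum of logarithms yields
$$A(z)=\frac{\prod_{j=1}^{n_y}(1-y_j z)}{\prod_{j=1}^{n_x}(1-x_j z)}.$$
This is Newton's identity in disguise, and it is exactly the identification promised in Section~\ref{sec:Markov} (and carried out for the balanced case $n_x=n_y$ in~\cite{GO2}). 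The minimal-degree solution of \eq{mom-olof2} then corresponds precisely to the irreducible form of this rational function, and the nonzero $\{x_j\}$ are the reciprocals of its poles.

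Given this, parts~(i) and~(ii) reduce to classical Hankel-matrix statements. The matrices $A_1$, $\tilde{A}_0$, $\tilde{A}_1$ are contiguous Hankel blocks of the Taylor coefficients of $A(z)$. A Kronecker-type theorem says that the rank of the infinite Hankel matrix of the Taylor series of a rational function equals its McMillan degree, and that once the index window is pushed beyond the numerator degree the square block whose size matches that degree is non-singular. The shifts built into the definitions of $A_1$ and $\tilde{A}_1$ were chosen precisely for this reason, so combined with Theorem~\ref{existence} they produce the non-singularity of $\tilde{A}_1$. The Yule--Walker equation $\tilde{A}_1\cb'=-\tilde{\a}_0$ is then a restatement of the linear recursion $\tilde{Q}(z)A(z)=\tilde{P}(z)$ satisfied by the coefficients $a_k$ past the numerator degree, so $\cb'$ collects the reversed coefficients of the reduced denominator $\tilde{Q}$; the polynomial $P(z)$ of Algorithm~\ref{algo2} is the reciprocal polynomial of $\tilde{Q}$, whose roots are the nonzero $x_j$ padded with $\tilde{n}_x-\Dmin$ zeros accounting for the further cancellations against $y_j$'s that the minimal-degree solution admits. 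Checking that $\tilde{A}_1^{-1}\tilde{A}_0$ is similar to the companion matrix of $P$ then yields \eq{charpol} (hence (ii)) and identifies the generalised eigenvalues of $(\tilde{A}_0,\tilde{A}_1)$ with the roots of $P$ (hence (i)).

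For~(iii), the key observation is the recursion $\sum_{j=0}^{n_x}\bar{c}_j a_{K+1-j}=0$ (with $\bar{c}_0=1$), which is $\tilde{Q}(z)A(z)=\tilde{P}(z)$ evaluated at index $K+1$; this identity holds because $K+1>n_y\ge\deg\tilde{P}$. Although $\cbar$ need not be unique when $A_1$ is singular, the ambiguity lives in the kernel of $A_1$ and does not affect the high-index recursion, so $a_{K+1}$ is determined independently of the chosen $\cbar$. Combining with Newton's identity $(K+1)a_{K+1}=\sum_{i=1}^{K+1}m_i a_{K+1-i}$ and solving for $m_{K+1}$ produces exactly the formula stated in step~3 of Algorithm~\ref{algo3}. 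That the resulting value also agrees with $\sum x_j^{K+1}-\sum y_j^{K+1}$ for \emph{every} solution of \eq{mom-olof2} follows because, by the identity in the first paragraph, all such solutions yield the same rational function $A(z)$ and hence the same Newton power-sum sequence at every index.

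The main obstacle will be the bookkeeping between the declared dimensions $n_x,n_y$, the effective dimensions $\tilde{n}_x,\tilde{n}_y$, and the minimal degree $\Dmin$: one has to verify that the specific Hankel subblocks appearing with the offsets chosen in Algorithms~\ref{algo1}--\ref{algo2} are invertible and that the extra $\tilde{n}_x-\Dmin$ zeros in the spectrum of $\tilde{A}_1^{-1}\tilde{A}_0$ arise with the correct multiplicity. This is precisely where the structural theory of Hankel matrices of rational Taylor series, developed in Sections~\ref{sec:prelim}--\ref{sec:exist} and in~\cite{GO2}, must be invoked rather than cited generically.
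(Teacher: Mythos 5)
Your generating-function setup and the treatment of part (iii) essentially reproduce the paper's route: the identity $A(z)=q(z)/p(z)$ is the paper's Lemma~\ref{aexponent}, the $(K{+}1)$-th Taylor coefficient comparison is the paper's equation \eq{aKrel}, and the final step via Newton's recursion \eq{adef} is identical. Your added remark that the result is independent of the choice of $\cbar$ (the ambiguity lives in $\ker A_1$ and, using the Hankel symmetry $A_1^T=RA_1R$ together with $\a_0\in\mathrm{range}\,A_1$, is annihilated by the reversed $\a_0$) is correct and makes explicit something the paper leaves implicit; this is a small genuine improvement.

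For parts (i) and (ii), however, there is a real gap. The entire load of the theorem is the assertion that the specific $\tilde n_x\times\tilde n_x$ Hankel block $\tilde A_1$, taken with the particular index offset prescribed in step~3 of Algorithm~\ref{algo1}, is non-singular. You reduce this to a ``Kronecker-type theorem'' about Hankel matrices of rational Taylor series, but as you yourself note at the end, this is ``precisely where the structural theory \dots must be invoked rather than cited generically.'' Kronecker's theorem controls the rank of the \emph{infinite} Hankel matrix; getting non-singularity of a \emph{finite contiguous block} requires knowing that the window has been shifted past the numerator degree, and verifying that the shift $\tilde n_y-\tilde n_x+1$ built into $\tilde A_1$ achieves this is exactly the content that must be proved. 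The paper does this concretely: it introduces the reduced problem \eq{mom-red} with parameters $\tilde n_x=\mathrm{rank}\,A_1$, $\tilde n_y=n_y-n_x+\tilde n_x$, shows via the degree bounds of Theorem~\ref{existence}(iii)--(iv) that the minimal-degree pair $(p^*,q^*)$ is a solution of the reduced problem and that it is the \emph{unique} one, and then invokes Theorem~\ref{existence}(v) to conclude $\tilde A_1$ is non-singular. This degree-counting argument is not optional bookkeeping; it is the proof. Once that is in place, your description of $\tilde A_1^{-1}\tilde A_0$ as a companion matrix and of $P(z)=z^{\tilde n_x}P_{c}(1/z)$ matches the paper's explicit computation, including the origin of the $\tilde n_x-\Dmin$ extra zero roots (they come from $\deg p^*=\Dmin<\tilde n_x$, not from ``further cancellations against the $y_j$'s'' as you phrase it).

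In short: (iii) is essentially the paper's proof; (i)--(ii) have the right skeleton but defer the central non-singularity claim to an unproved Hankel-theoretic assertion, which is the step the paper's Section~\ref{sec:T1proof} actually carries out.
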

We postpone the proof of Theorem \ref{solvalgo} to 
Section \ref{sec:T1proof}. We just note here that the last point in
Algorithms 1 and 2 can easily be explained by the symmetry
of the problem. Indeed, the negative of \eq{mom-olof2}
$$
  -m_k = \sum_{j=1}^{n_y} y_{j}^k-\sum_{j=1}^{n_x} x_{j}^k,
\qquad k=1,\ldots, K,
$$
is of the same form as \eq{mom-olof2} itself, with the
roles of $n_x$, $\{x_j\}$ and $n_y$, $\{y_j\}$ interchanged.


\section{Preliminaries}
\label{sec:prelim}


We will use three different ways of describing the
solution to \eq{mom-olof2}. First we have a set of numbers
$\{x_j\}_{j=1}^{n_x}$ and $\{y_j\}_{j=1}^{n_y}$, solving \eq{mom-olof2}. 
We call those numbers branch values.
Second, we have a pair of polynomials $(p,q)$ of degrees at most $n_x$ and $n_y$ 
respectively in the $z$ variable. Third, we
have a pair of coefficient vectors $\cb=(c_0,\ldots,c_{n_x})^T\in\Real^{n_x+1}$
and $\db=(d_0,\ldots,d_{n_x})^T\in\Real^{n_y+1}$. These three representations
are related as
\be{Pc}
   p(z)=(1-x_1z)\cdots(1-x_{n_x}z)=c_{0} +c_{1}z + \cdots + c_{n_x-1} z^{n_x-1}+c_{n_x} z^{n_x},
\ee
and
\be{Pd}
   q(z)=(1-y_1z)\cdots(1-y_{n_y}z)=d_{0} +d_{1}z + \cdots + d_{n_y-1} z^{n_y-1}+d_{n_y} z^{n_y}.
\ee
It is clear that there is a one-to-one correspondence
between these ways of describing the solution, if
we disregard the ambiguity in the ordering of the
numbers $\{x_j\}$ and $\{y_j\}$. 
Generally, we will use the notation $\Deg(p)$ to denote
the degree of a polynomial $p$, and, for a given
coefficient vector $\cb$, we systematically write
$P_{c}$ to denote the corresponding 
polynomial \eq{Pc}.

\begin{definition}
We call the pair of polynomials $(p,q)$ \underline{a (polynomial) solution}
to \eq{mom-olof2} if 
\begin{enumerate}
\item  The degrees of $p$ and $q$ are at
most $n_x$ and $n_y$,
\be{polsol1}
\Deg(p)\leq n_x, \qquad \Deg(q)\leq n_y,
\ee
\item They are normalized to one at the origin,
\be{polsol2}
p(0)=q(0)=1,
\ee
\item Their roots $\{\tilde{x}_j\}$ and $\{\tilde{y}_j\}$ satisfy
\be{polsol3}
  m_k = \sum_{j=1}^{\Deg(p)} \tilde{x}_{j}^{-k}- \sum_{j=1}^{\Deg(q)} \tilde{y}_{j}^{-k},
\qquad k=1,\ldots, K.
\ee
We note that the roots cannot be zero because of \eq{polsol2}.
\end{enumerate}
\end{definition}
Next:
\begin{definition}
A pair of vectors
$$
\cb=(c_0,\ldots,c_{n_x})^T\in\Real^{n_x+1} 
\mbox{ and } \db=(d_0,\ldots,d_{n_y})^T\in\Real^{n_y+1}
$$
is said to be \underline{a (coefficient) solution} to \eq{mom-olof2} 
if the corresponding pair $(P_{c},P_{d})$ \eq{Pc}--\eq{Pd} realizes a polynomial
solution to \eq{mom-olof2}.
\end{definition}

The number of branch
values are always $n_x$ and $n_y$ respectively. Some
of them may be zero, and they do not need to be distinct. The
number of non-zero branch values are $\Deg(p)$ and $\Deg(q)$
respectively. The degree of a solution can then also be defined.
\begin{definition}
The \underline{degree} of a solution to \eq{mom-olof2}
is the number of non-zero $x_j$-values. This number is equivalent
to $\Deg(p)$.
\end{definition}

Given any polynomial pair satisfying \eq{polsol2}, 
we say that it generates the moment sequence $\{m_k\}$
if $m_k$ is given by \eq{polsol3} for all $k$. 
In turn, each sequence of moments $\{m_k\}$ generates the corresponding
$\{a_k\}$ sequence through \eq{adef}.
We define the big matrix
$$ 
   A=\Vector{a_{{n_y+1}} & a_{n_y} & \hdots & a_{n_y-n_x+1} \\
 a_{n_y+2} & a_{n_y+1} & \hdots &a_{n_y-n_x+2} \\
            \vdots & \vdots & \ddots & \vdots \\
a_{n_y+n_x} & a_{n_y+n_x-1} & \hdots & 
a_{n_y}
}\in\Real^{n_x\times (n_x+1)}.
$$
We let the columns of $A$ be denoted $\a_0,\ldots,\a_{n_x}$ and
we note that
\be{Amdef}
   A = \Vector{ | & & | \\ \a_0 & \cdots & \a_{n_x} \\ | & & | }
 = \Vector{ & & | \\  &   A_0 & \a_{n_x}\\  &  & |}
 = \Vector{ | & &  \\ \a_0 &   A_1 & \\ | &  & },
\ee
Hence, $A_0$ and $A_1$ constitutes the first and last $n_x$ columns of
$A$ respectively. 
When $\a_0\in {\rm range}\; A_1$ and $\a_0\neq 0$, let
\be{Dmin}
   \Dmin = {\rm argmin}_{j> 0}\; \a_0 \in \Span\{\a_1,\ldots,\a_j\},
\ee
and set $\Dmin=0$ if $\a_0=0$. Moreover, define
\be{Dmax}
 \Dmax = \Dmin + n_x - \Rank A_1.
\ee
%


\section{Existence and uniqueness of solutions}
\label{sec:exist}

In this section we prove results on the existence
and uniqueness of solutions to \eq{mom-olof2}.
We aim at establishing the following theorem:
\begin{theorem}\label{existence}
\mbox{}
\begin{itemize}
\item[\rm (i)]
There exists a solution to \eq{mom-olof2} if and only if
\be{rankcond}
   \a_0 \in {\rm range}(A_1).
\ee
\item[\rm (ii)]
If $d$ is the degree of a solution to \eq{mom-olof2}, then
$\Dmin\leq d \leq \Dmax$.

\item[\rm (iii)]
When \eq{rankcond} holds, there is a unique solution $(p^*,q^*)$
of minimal degree $\Dmin$. For this solution, 
$x_j\neq y_i$ for all indices $i,j$ representing non-zero branch values.
Moreover, $\Deg(q^*)\leq n_y-n_x+\Rank A_1$ with equality if $\Dmin<\Rank A_1$.
\item[\rm (iv)]
When \eq{rankcond} holds, a polynomial pair $(p,q)$ is a solution
if and only if $p=p^*r$ and $q=q^*r$  where $r(z)$ is a polynomial satisfying 
$r(0)=1$ and $\Deg(r)\leq \Dmax-\Dmin$.
\item[\rm (v)]
The minimal degree solution is the only solution to \eq{mom-olof2}
if and only if the matrix $A_1$ is non-singular. 
\item[\rm (vi)] Let $\{x_j\}$ and $\{y_j\}$ be a solution to \eq{mom-olof2}.
Then the higher moments defined in \eq{highmom} are well-defined.
\end{itemize}
\end{theorem}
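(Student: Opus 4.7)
The plan is to reduce the claim to a statement about the rational function $q(z)/p(z)$ and then invoke part (iv). I would first associate, to any solution $\{x_j\}\cup\{y_j\}$ of \eq{mom-olof2}, the polynomial pair $(p,q)$ defined by \eq{Pc}--\eq{Pd}. Since branch values may include zeros and need not be distinct, $p$ and $q$ retain only the information of the non-zero branch values; crucially, the power sums $\sum_j x_j^k$ for $k\geq 1$ are insensitive to zero branches, so \eq{highmom} is unchanged when the sums are restricted to the non-zero roots $\tilde{x}_j=1/(\text{root of }p)$ and $\tilde{y}_j=1/(\text{root of }q)$.

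Next I would read off the moments from the formal logarithmic generating function. Expanding $-\log(1-\tilde{x}_j z)=\sum_{k\geq 1}\tilde{x}_j^k z^k/k$ over the non-zero roots of $p$, and similarly for $q$, and combining, one obtains
\[
\log\frac{q(z)}{p(z)} \;=\; \sum_{k\geq 1} \frac{1}{k}\Biggl(\sum_{j=1}^{n_x} x_j^k-\sum_{j=1}^{n_y} y_j^k\Biggr)\, z^k.
\]
Consequently, for every $k\geq 1$, the (possibly extrapolated) moment defined by \eq{highmom} equals $k$ times the $k$-th coefficient of this power series, and hence depends only on the rational function $q(z)/p(z)$.

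Finally, I would invoke part (iv): any solution $(p,q)$ is of the form $(p^* r,\, q^* r)$ for a common polynomial factor $r$ with $r(0)=1$. Thus $q(z)/p(z)=q^*(z)/p^*(z)$, a rational function that is independent of the particular solution chosen and depends only on the prescribed moments $m_1,\ldots,m_K$ through the unique minimal solution $(p^*,q^*)$. Combining this with the previous step shows that $m_k$ for $k>K$ takes the same value regardless of which solution of \eq{mom-olof2} is used to compute \eq{highmom}, proving (vi).

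The main conceptual step is the cancellation of the common factor $r$; once (iv) is in hand, everything else is a routine generating-function computation. The only subtlety is handling zero branch values correctly, which is painless since they contribute $0$ to both \eq{highmom} and the logarithmic expansion.
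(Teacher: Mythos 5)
Your proposal only addresses part~(vi) of the six-part theorem; parts~(i)--(v), which constitute the bulk of the result (existence, the range $[\Dmin,\Dmax]$ for the degree, uniqueness and structure of the minimal-degree solution, the parametrization by the common factor $r$, and the non-singularity criterion), are not argued at all. In fact your argument \emph{presupposes} (iv), so treating (iv) as given while the statement to be proved includes (iv) is circular unless you also supply a separate proof of (iv) (which in the paper rests on Propositions~\ref{pqunique} and~\ref{avectors}). As a proof of the full theorem, this is a genuine gap.

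Your proof of (vi) itself, taken in isolation, is correct and is essentially the paper's argument presented in generating-function form. Both approaches reduce (vi) to (iv): the paper writes $m_k$ as a sum of powers of the roots of $p^*$, $q^*$, and the shared factor $r$, and observes that the $r$-contributions appear in both the positive and negative sums and cancel; you instead observe that $r$ cancels already at the level of the rational function $q/p=q^*/p^*$ and then read off $m_k$ from $\log(q/p)$ (which is the content of Lemma~\ref{aexponent}). These are two presentations of the same cancellation, with your version perhaps slightly cleaner since it avoids naming the roots of $r$, at the mild cost of invoking the logarithmic generating function explicitly. Your remark about zero branch values contributing nothing to the power sums is correct and handles the case $\Deg(p)<n_x$ or $\Deg(q)<n_y$ properly.
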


Let us proceed with several remarks:

\begin{remark}
In particular it follows from (i) that there exists a solution
as soon as the matrix $A_1$ is non-singular.
\end{remark}
\begin{remark}
Since \eq{mom-olof2} is a system of polynomial equations of
degree $K$, one could expect there to be a finite number of solutions, typically
$K$ solutions. However, because of the special structure of the
equations there is either one unique solution (when $A_1$ is
non-singular) or inifintely many solutions (when $A_1$ is singular).
\end{remark}
\begin{remark}
The form $(p^*r,q^*r)$ of solutions can also be stated
as follows: All solutions have a core set of values
$\{x_j\}$, $j=1,\ldots,\Deg(p^*)=\Dmin$ and
$\{y_i\}$, $i=1,\ldots,\Deg(q^*)$corresponding to non-zero branch values of
the minimal degree solution, 
where $x_j\neq y_i$ for all those $i,j$.
One can then add an optional set of non-zero branch values
$\{{x}_{\Dmin+j}\}$, and
$\{{y}_{\Deg(q^*)+j}\}$, for $j=1,\ldots,\Dmax-\Dmin$ such
that ${x}_{\Dmin+j}={y}_{\Deg(q^*)+j}$. 
\end{remark}

To prove this theorem we first establish some utility
results in the next subsection. We then derive different
ways of characterizing the solution in Section \ref{charsol}, 
which are subsequently
used to prove Theorem \ref{existence} in Section \ref{T2proof}.

\subsection{Utility results}

We start with a useful lemma on Taylor coefficients for a product of functions:
\begin{lemma}\label{Taylor}
Suppose $f$, $g$ and $h$ are analytic functions in a neighborhood of
zero satsifying $f(z)=g(z)h(z)$. Let $f$ have the Taylor expansion
$$
  f(z) = \sum_{k=0}^\infty f_k z^k,
$$
and let $\{g_k\}$, and $\{h_k\}$ be the corresponding coefficients 
for $g(x)$ and $h(x)$ respectively. Then
\be{Taylorrel}
   f_k = \sum_{j=0}^kg_jh_{k-j}.
\ee
\end{lemma}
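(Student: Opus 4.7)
The plan is to use the classical Cauchy product for convergent power series, which is exactly the content of this lemma. Since $g$ and $h$ are analytic in a neighborhood of $0$, their Taylor series $g(z)=\sum_{j\geq 0}g_j z^j$ and $h(z)=\sum_{l\geq 0}h_l z^l$ converge absolutely on some disk $|z|<\rho$. On that disk, absolute convergence lets us multiply the series term-by-term and rearrange:
$$
g(z)h(z) = \sum_{j\geq 0}\sum_{l\geq 0}g_j h_l\, z^{j+l} = \sum_{k\geq 0}\Biggl(\sum_{j=0}^k g_j h_{k-j}\Biggr) z^k.
$$
On the same disk $f(z)=g(z)h(z)$ also admits its Taylor expansion $\sum_{k\geq 0}f_k z^k$, and by uniqueness of Taylor coefficients for analytic functions we can identify the coefficients of $z^k$ on the two sides, which is exactly \eq{Taylorrel}.

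As an equally short alternative I would invoke the general Leibniz rule: since $f=gh$ is analytic at $0$, for every $k\geq 0$
$$
f^{(k)}(0) = \sum_{j=0}^k \binom{k}{j} g^{(j)}(0)\, h^{(k-j)}(0),
$$
and dividing by $k!$ together with the identities $f_k=f^{(k)}(0)/k!$, $g_j=g^{(j)}(0)/j!$, $h_{k-j}=h^{(k-j)}(0)/(k-j)!$ yields \eq{Taylorrel} immediately.

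There is no real obstacle: the only subtle point is justifying the rearrangement of the double series, which is standard from absolute convergence on a common disk of convergence of $g$ and $h$. I would pick whichever of the two approaches (Cauchy product or Leibniz rule) the authors prefer stylistically; both give a one-line proof and the lemma is used elsewhere only as an algebraic identity relating coefficients.
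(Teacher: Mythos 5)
Your second alternative (the Leibniz rule argument) is essentially identical to the paper's own proof, which expands $f^{(k)}(0)$ via the product rule with binomial coefficients and then divides by $k!$. The Cauchy-product route you also sketch is a correct alternative, but you need not present both; either suffices and the Leibniz version matches what the authors do.
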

\begin{proof}
  Since the functions are analytic the coefficients are given as
$$
   f_k = \frac{1}{k!}\frac{d^k}{dz^k}f(z)\Bigr|_{z=0}
= \frac{1}{k!}\frac{d^k}{dz^k}g(z)h(z)\Bigr|_{z=0}
= \frac{1}{k!}\sum_{j=0}^kc_{jk}g^{(j)}(0)h^{(k-j)}(0),
$$
where $c_{jk}=k!/j!(k-j)!$ are the binomial coefficients.
But $g^{(j)}(0)=j!g_j$ and $ h^{(k-j)}(0)=(k-j)!h_{k-j}$ and therefore
\eq{Taylorrel} follows.
\end{proof}
\begin{remark}
 The sum \eq{Taylorrel} is in fact precisely an elementwise description of
multiplication of a lower triangular $k\times k$ Toeplitz
matrix by a vector. In the notation of \cite{GO2},
it would read ${\bv f} = {\mathcal T}({\bv g}){\bv h}$.
\end{remark}

As was already known by Markov, the exponential transform
of the moment sequence plays an important role in the
analysis of these problems, see {\it e.g.} \cite{akhiezer,ak-krein}. 
We show here that $\{a_k\}$ is a version of
the exponential transform of $\{m_k\}$. 

\begin{lemma}\label{aexponent}
Suppose $\{m_k\}$ is generated by the polynomials $p(z)$ and $q(z)$
and $\{a_k\}$ is generated by $\{m_k\}$.
Let $m(z)$ be defined as
\be{mdef}
  m(z) = m_1z + \frac12 m_2 z^2 + \frac13 m_3z^3 + \cdots.
\ee
Then if \eq{polsol2} holds,
\be{achar1}
   e^{m(z)} = \frac{q(z)}{p(z)} = a_0 + a_1 z + a_2 z^2 +\cdots,
\ee
written as its Taylor expansion around $z=0$.
\end{lemma}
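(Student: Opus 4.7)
The plan is to establish the two equalities in \eq{achar1} separately: first the middle equality $e^{m(z)}=q(z)/p(z)$, then the claim that the Taylor coefficients of this function agree with the $a_k$ defined by the recurrence \eq{adef}.

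For the first equality, I would expand the logarithms of the factors of $p$ and $q$. Specifically, using the factored forms \eq{Pc}--\eq{Pd} together with the standard series $\log(1-xz)=-\sum_{k\geq 1} x^k z^k/k$, valid in a neighborhood of $z=0$, one obtains
\[
\log p(z) = -\sum_{k\geq 1}\frac{z^k}{k}\sum_{j=1}^{n_x}x_j^k,\qquad \log q(z) = -\sum_{k\geq 1}\frac{z^k}{k}\sum_{j=1}^{n_y}y_j^k.
\]
Subtracting and using that the pair $(p,q)$ is assumed to generate $\{m_k\}$, i.e.\ \eq{polsol3} with $\tilde x_j^{-1}=x_j$ and $\tilde y_j^{-1}=y_j$, the right-hand side telescopes to $m(z)$ from \eq{mdef}. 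Exponentiating and invoking $p(0)=q(0)=1$ to fix the constant of integration (so $e^{m(0)}=1=q(0)/p(0)$) gives $e^{m(z)}=q(z)/p(z)$.

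For the second equality, let $A(z)=e^{m(z)}$, which is analytic at the origin with $A(0)=1$, so it has a Taylor expansion $A(z)=\sum_{k\geq 0}\alpha_k z^k$ with $\alpha_0=1$. I would then show $\alpha_k=a_k$ by differentiating: the relation $A'(z)=m'(z)A(z)$ reads
\[
\sum_{k\geq 1} k\alpha_k z^{k-1} = \Bigl(\sum_{j\geq 1}m_j z^{j-1}\Bigr)\Bigl(\sum_{i\geq 0}\alpha_i z^i\Bigr).
\]
Matching the coefficient of $z^{k-1}$ (either by direct convolution or by applying Lemma \ref{Taylor} to $A'=m'A$) yields
\[
k\alpha_k = \sum_{j=1}^{k} m_j\alpha_{k-j},\qquad k\geq 1.
\]
This is precisely the $k$-th row of the triangular system \eq{adef}, together with $\alpha_0=1=a_0$. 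Since the triangular system has a unique solution, $\alpha_k=a_k$ for all $k\geq 1$, which gives the second equality in \eq{achar1}.

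I do not anticipate a real obstacle: the log--expansion trick is standard, and the ODE $A'=m'A$ reproduces exactly the triangular recurrence \eq{adef}, so both halves of the statement are essentially bookkeeping. The only point where one must be a bit careful is ensuring that the series manipulations (the log expansion and the product of Taylor series) are justified in a common neighborhood of $z=0$, which follows from $p(0)=q(0)=1$ and the analyticity of all the functions involved.
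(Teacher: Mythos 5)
Your proof is correct and follows essentially the same two-step route as the paper's: the log-series expansion to identify $e^{m(z)}=q(z)/p(z)$ (the paper expands $m(z)$ and recognizes the logarithms, you expand $\log p$, $\log q$ and recognize $m(z)$, which is the same computation read in the other direction), and then differentiating $A(z)=e^{m(z)}$ and matching Taylor coefficients to recover the triangular recurrence \eq{adef} (the paper uses $za'(z)=zm'(z)a(z)$ and Lemma \ref{Taylor}, you use $A'=m'A$ directly — equivalent up to an index shift).
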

\begin{proof}
  Let us first show that $m(z)$ is a well-defined
analytic function at zero. We have
$$\begin{array}{rcl}
  m(z) & = &\sum_{k=0}^\infty \frac{m_kz^k}{k} \\
        & = &\sum_{k=0}^\infty\sum_{j=1}^{n_x} \frac{x_j^kz^k}{k}
- \sum_{k=0}^\infty\sum_{j=1}^{n_y} \frac{y_j^kz^k}{k} \\
        & = &-\sum_{j=1}^{n_x} \log(1-x_jz)
+ \sum_{j=1}^{n_y} \log(1-y_jz).
\end{array}$$
The last step is allowed when $|z|< 1/\max_{ij}(|x_j|,|y_i|)$,
which is true for small enough $z$ since $p(0)\neq 0$.
This also shows that the function is analytic at zero.
Moreover,
$$
  e^{m(z)} = \frac{\prod_{j=1}^{n_y} (1-y_jz)}
{\prod_{j=1}^{n_x} (1-x_jz)}
= \frac{q(z)}{p(z)}.
$$
Finally, setting $a(z):=\exp(m(z))$ and differentiating gives
$$
   za'(z) = zm'(z)a(z),
$$
where all three functions are analytic at zero.
Let $a(z)$ have the Taylor coefficients $\{\tilde{a}_k\}$.
Then $za'(z) = \tilde{a}_1z + 2\tilde{a}_2z^2 + 3\tilde{a}_3 z^3\cdots$
and clearly $zm'(z) = m_1z + m_2z^2 +\cdots$. By Lemma \ref{Taylor},
for $k\geq 1$,
$$
    k\tilde{a}_k = \sum_{j=1}^k m_j\tilde{a}_{k-j}.
$$
Since $\tilde{a}_0 = q(0)/p(0)=1$, we see that $a_k$ and $\tilde{a}_k$ satisfy
the same non-singular
linear system of equations \eq{adef}, and therefore $a_k=\tilde{a}_k$,
showing \eq{achar1}.
\end{proof}

We now have the following basic characterization of a solution.
\begin{lemma}\label{pqexist}
Suppose ${p}(z)$ and $q(z)$ are two polynomials satisfying \eqtwo{polsol1}{polsol2}.
They form a polynomial solution to \eq{mom-olof2} if and only if
their quotient has the Taylor expansion around $z=0$
\be{pqtaylor}
  \frac{{q}(z)}{{p}(z)}=a_0 +a_1z +\cdots +a_Kz^K + O\left(z^{K+1}\right),
\ee
where $\{a_k\}$ is generated by $\{m_k\}$.
Moreover, if $(p,q)$ is a solution then
$(\bar{p},\bar{q})$ is also a solution if and only if the pair
satisfies \eqtwo{polsol1}{polsol2}
and $\bar{p}/\bar{q}=p/q$ where these fractions are defined.
\end{lemma}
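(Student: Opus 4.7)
The plan is to reduce both assertions of the lemma to Lemma \ref{aexponent}, exploiting the fact that the triangular recursion \eq{adef} can be inverted to recover $\{m_k\}$ from $\{a_k\}$ (and vice versa), and then using a degree-counting argument for the \emph{moreover} part.

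For the first equivalence, the key observation is that any polynomial pair $(p,q)$ satisfying \eqtwo{polsol1}{polsol2} automatically generates, via its roots and formula \eq{polsol3}, a full moment sequence $\{\tilde m_k\}_{k\geq 1}$, which in turn generates its own $\{\tilde a_k\}_{k\geq 0}$ through \eq{adef}. Lemma \ref{aexponent} then says that
\[
   \frac{q(z)}{p(z)} \;=\; \sum_{k=0}^\infty \tilde a_k z^k
\]
as a Taylor series around $z=0$. Because the system \eq{adef} is lower triangular with nonzero diagonal, the map $(\tilde m_1,\ldots,\tilde m_K)\mapsto(\tilde a_1,\ldots,\tilde a_K)$ is a bijection. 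Hence $\tilde m_k=m_k$ for $k=1,\ldots,K$ (which is precisely the statement that $(p,q)$ is a polynomial solution) is equivalent to $\tilde a_k=a_k$ for $k=0,\ldots,K$, which is precisely \eq{pqtaylor}.

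For the \emph{moreover} part, I would argue as follows. One direction is immediate: if $\bar p/\bar q = p/q$ where both are defined and $(\bar p,\bar q)$ satisfies \eqtwo{polsol1}{polsol2}, then $\bar q/\bar p$ and $q/p$ share the \emph{entire} Taylor expansion around $0$, so by the equivalence just proved, $(\bar p,\bar q)$ is a solution too. For the converse, assume both $(p,q)$ and $(\bar p,\bar q)$ are solutions. By \eq{pqtaylor} their quotients agree to order $K$, so
\[
   \frac{\bar q(z)}{\bar p(z)} - \frac{q(z)}{p(z)} \;=\; \frac{\bar q(z)p(z)-q(z)\bar p(z)}{\bar p(z)p(z)} \;=\; O(z^{K+1}).
\]
Since $\bar p(0)p(0)=1\neq 0$, the analytic denominator is a unit at zero, so the numerator itself vanishes to order $K+1$ at zero. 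But $\bar q p - q\bar p$ is a polynomial of degree at most $n_x+n_y=K$, and a polynomial of degree at most $K$ whose Taylor expansion at $0$ vanishes through order $K$ must be identically zero. Therefore $\bar q p = q\bar p$, i.e.\ $\bar q/\bar p = q/p$ wherever defined.

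I do not anticipate a real obstacle here: everything reduces to Lemma \ref{aexponent} together with the two algebraic facts (triangularity of \eq{adef} with nonzero diagonal, and the degree bound $\deg(\bar q p - q\bar p)\leq K$). The only point that needs a moment of care is to make sure the inversion argument is applied at the correct truncation level $K$, so that the finite-moment problem \eq{mom-olof2} is what is actually being captured by the truncated Taylor condition \eq{pqtaylor}.
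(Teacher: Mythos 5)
Your proposal is correct and follows essentially the same route as the paper. The one cosmetic difference is in the first equivalence: you invoke "bijectivity" of the map $(\tilde m_1,\ldots,\tilde m_K)\mapsto(\tilde a_1,\ldots,\tilde a_K)$, whereas the paper carries out the inversion explicitly as an induction, subtracting the two Newton-type recursions $ka_k=\sum_{j\le k}m_j a_{k-j}$ and $ka_k=\sum_{j\le k}\tilde m_j a_{k-j}$ to get $m_n-\tilde m_n=-\sum_{k<n}(m_k-\tilde m_k)a_{n-k}$ and then inducting from $m_1=\tilde m_1$. Your phrasing hides the reason the map is invertible (the lower-triangularity of \eq{adef} by itself only gives a well-defined forward map; you also need that the recursion can be solved for $m_k$ in terms of $a_1,\ldots,a_k$ and $m_1,\ldots,m_{k-1}$), so a reader would want one sentence making that explicit. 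The \emph{moreover} argument (clearing denominators, using $\bar p(0)p(0)=1$ to pass the $O(z^{K+1})$ bound to the numerator, and then the degree bound $\deg(\bar q p-q\bar p)\le K$) is the same as the paper's.
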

\begin{proof}
Let $\{\tilde{m}_k\}$ be generated by ${p}$ and ${q}$ and suppose \eq{pqtaylor} holds. 
Then, as in the of proof of Lemma \ref{aexponent} for $1\leq k \leq K$
$$
    ka_k = \sum_{j=1}^k \tilde{m}_ja_{k-j}.
$$
Since $\{m_k\}$ satisfy the linear system \eq{adef}, we have after subtraction,
$$
  m_n-\tilde{m}_n = -\sum_{k=1}^{n-1}  (m_k-\tilde{m}_k)a_{n-k},\qquad
    m_1=\tilde{m}_1,
$$
for $n=2,\ldots, K$. By induction $\tilde{m}_k=m_k$ for 
$1\leq k\leq K$, showing that $({p},{q})$
solves \eq{mom-olof2}. On the other hand, if $({p},{q})$
is a solution, then \eq{pqtaylor} must hold by \eq{achar1}
in Lemma \ref{aexponent}.

For the last statement, the ``if'' part is obvious
since both pairs then satisfy \eq{pqtaylor}. To show 
the ``only if'' part, suppose both $(p,q)$ and $(\bar{p},\bar{q})$
are solutions. By definition they satisfy \eqtwo{polsol1}{polsol2},
and by \eq{pqtaylor},
$$
  \frac{\bar{q}(z)}{\bar{p}(z)}-\frac{{q}(z)}{{p}(z)} =
  \frac{\bar{q}(z){p}(z)-\bar{p}(z){q}(z)}{\bar{p}(z){p}(z)}
 = O(z^{K+1}).
$$
Since $\bar{p}(0){p}(0)=1$ we must have that
$(\bar{q}(z){p}(z)-\bar{p}(z){q}(z))/z^{K+1}$ is bounded
as $z\to 0$. But since the degree of $\bar{q}{p}-\bar{p}q$
is at most $K=n_x+n_y$ this is only possible if it is identically zero.
Hence $\bar{q}(z){p}(z)=\bar{p}(z){q}(z)$ which concludes the proof.
\end{proof}

\subsection{Characterization of the solution}
\label{charsol}

In this section we show three Propositions that characterize
solutions to \eq{mom-olof2} in terms of polynomials,
coefficient vectors and the column vectors of the $A$-matrix
in \eq{Amdef}.
We start by  expressing the
uniqueness properties of the solution in terms of
its polyomial representation.
\begin{proposition}\label{pqunique}
Suppose the pairs $(p,q)$ and $(\bar{p},\bar{q})$ are both
polynomial solutions to \eq{mom-olof2}.
Then, 
\begin{enumerate}
\item[\rm (i)] $\Deg(p)-\Deg(q) = \Deg(\bar{p})-\Deg(\bar{q})$.
\item[\rm (ii)]
If $\Deg(\bar{p})\leq\Deg(p)$,
and if there is no polynomial $r(z)$ such that $p=\bar{p}r$,
then there is 
another solution $(\tilde{p},\tilde{q})$ with $\Deg(\tilde{p})<\Deg(p)$.
In particular, if $\Deg(p)=\Deg(\bar{p})$ but $p\neq\bar{p}$, there is 
such a lower degree solution.
\item [\rm (iii)]If $\Deg(\bar{p})\leq\Deg(p)$, any polynomial
pair $(\bar{p}r,\bar{q}r)$ is a solution if $r(z)$ is
a polynomial satisfying
$r(0)=1$ and $\Deg(r)\leq \Deg(p)-\Deg(\bar{p})$.
In particular, if $\Deg(\bar{p})\leq m \leq \Deg(p)$ 
there is a solution $(\tilde{p},\tilde{q})$
with $\Deg(\tilde{p})=m$.
\end{enumerate}
\end{proposition}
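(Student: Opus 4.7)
My starting point is Lemma \ref{pqexist}, which translates the condition that two pairs $(p,q)$ and $(\bar p,\bar q)$ both solve \eq{mom-olof2} into a single polynomial identity $\bar q(z) p(z) = \bar p(z) q(z)$, and also characterizes any individual solution through the Taylor expansion \eq{pqtaylor} of its quotient. With this in hand, (i) falls out immediately: taking degrees in the identity, and using that none of the four polynomials vanishes (each has constant term $1$), gives $\Deg(\bar q)+\Deg(p)=\Deg(\bar p)+\Deg(q)$.

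For (ii), the plan is to use polynomial gcd to ``peel off'' a common factor of $p$ and $\bar p$. Setting $g=\gcd(p,\bar p)$ and normalizing so that $g(0)=1$, I will write $p=g\,\tilde p_1$ and $\bar p=g\,\bar p_1$ with coprime $\tilde p_1,\bar p_1$ (both normalized to $1$ at the origin). Substituting into $\bar q p=\bar p q$ and cancelling $g$ yields $\bar q\,\tilde p_1=\bar p_1\,q$, and coprimality forces $\tilde p_1\mid q$. Writing $q=\tilde p_1\,\tilde q$, the candidate new solution is $(\tilde p,\tilde q):=(g,\tilde q)$: its quotient is $\tilde q/g=q/p$, so the Taylor coefficients agree with those of the original solution, and Lemma \ref{pqexist} turns this back into a polynomial solution. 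The bounds \eq{polsol1} and the normalization \eq{polsol2} are routine once the factors are chosen consistently.

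The main obstacle will be translating the hypothesis ``no polynomial $r$ with $p=\bar p r$'' into the strict inequality $\Deg(\tilde p)=\Deg(g)<\Deg(p)$. I plan to argue by contradiction: if $\Deg(g)=\Deg(p)$ then $g$ must coincide with $p$ (same degree, both normalized), which means $p\mid\bar p$; combined with $\Deg(\bar p)\leq\Deg(p)$ and $\bar p(0)=p(0)=1$, this collapses to $\bar p=p$, whence $r=1$ serves as a witness against the hypothesis. The ``in particular'' clause is an easy specialization: if $\Deg(p)=\Deg(\bar p)$ and $p\neq\bar p$, any hypothetical $r$ with $p=\bar p r$ would have $\Deg(r)=0$ and $r(0)=1$, forcing $r=1$ and $p=\bar p$.

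Part (iii) is a direct verification via Lemma \ref{pqexist}. For a polynomial $r$ with $r(0)=1$ and $\Deg(r)\leq\Deg(p)-\Deg(\bar p)$, the pair $(\bar p r,\bar q r)$ has the correct normalization, its quotient is unchanged and therefore has the right Taylor expansion, and the degree bounds follow from $\Deg(\bar p)+\Deg(r)\leq\Deg(p)\leq n_x$ and, invoking (i), $\Deg(\bar q)+\Deg(r)\leq\Deg(q)\leq n_y$. For the ``in particular'' clause, given $\Deg(\bar p)\leq m\leq\Deg(p)$, I will simply exhibit $r(z)=(1-z)^{m-\Deg(\bar p)}$, which has $r(0)=1$ and makes $\Deg(\bar p r)$ exactly $m$.
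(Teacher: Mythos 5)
Your proposal is correct; parts (i) and (iii) coincide with the paper's argument (and your choice $r(z)=(1-z)^{m-\Deg(\bar p)}$ is in fact a small repair of a slip in the paper, which says ``take $r(z)$ of degree $m$'' when it should say $m-\Deg(\bar p)$). Part (ii), however, takes a genuinely different route. The paper performs Euclidean division of $p$ by $\bar p$ and of $q$ by $\bar q$, deduces that the two quotients coincide and that the remainders satisfy $\bar q\,s_p=\bar p\,s_q$, then factors a common power of $z$ out of $s_p$ and $s_q$ and renormalizes to obtain $(\tilde p,\tilde q)$ of strictly smaller degree. You instead set $g=\gcd(p,\bar p)$, peel it off, and use coprimality of $\tilde p_1=p/g$ and $\bar p_1=\bar p/g$ together with Euclid's lemma to conclude $\tilde p_1\mid q$, producing $(\tilde p,\tilde q)=(g,q/\tilde p_1)$. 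Both arguments pass through the same identity $\bar q\,p=\bar p\,q$ from Lemma~\ref{pqexist} and both yield a solution of degree strictly below $\Deg(\bar p)$ (in your case because $\Deg(g)=\Deg(\bar p)$ would force $\bar p\mid p$, contradicting the hypothesis); the difference is essentially Euclidean division with remainder versus GCD plus Euclid's lemma. The GCD version is cleaner and shorter; the paper's version is more elementary in that it never invokes unique factorization or coprimality, and it exhibits the new solution explicitly from the division remainders. One minor caveat in your writeup: when you assert that $\gcd(p,\bar p)$ can be normalized to $g(0)=1$ and that $\tilde p_1,\bar p_1$ are coprime, you are implicitly using that $\Real[z]$ (or $\Cnumbers[z]$) is a UFD and that $p(0)=\bar p(0)=1$ rules out a factor of $z$ in $g$ — both true, but worth stating since the paper deliberately avoids this machinery.
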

\begin{proof}
\begin{enumerate}
\item[\rm (i)] The statement follows directly from Lemma \ref{pqexist},
since $\bar{q}{p}=\bar{p}q$
implies that
$$
  \Deg(\bar{q})+\Deg(p) = 
  \Deg(\bar{p})+\Deg(q).
$$

\item[\rm (ii)] We let
$$
   p(z) = r_p(z)\bar{p}(z) + s_p(z), \qquad
   q(z) = r_q(z)\bar{q}(z) + s_q(z),
$$
be the unique polynomial decomposition of $(p,q)$ such
that $r_p,r_q,s_p,s_q$ are polynomials,
$\Deg(s_p)<\Deg(\bar{p})$ and
$\Deg(s_q)<\Deg(\bar{q})$. Since $\bar{p}q=p\bar{q}$ by Lemma \ref{pqexist}, 
we get
$$
  \bar{p}\bar{q}(r_q-r_p) = \bar{q}s_p - \bar{p}s_q.
$$
Unless $r_q=r_p$ the degree of the left hand side
is at least $\Deg(\bar{p})+\Deg(\bar{q})$, while the
degree of the right hand side is at most
$$
  \max\left(\Deg(\bar{q})+\Deg(s_p),\Deg(\bar{p})+\Deg(s_q)\right)
<\Deg(\bar{q})+\Deg(\bar{p}).
$$
Hence, $r_q=r_p$ and $\bar{q}s_p = \bar{p}s_q$.
Since $\bar{q},\bar{p}\not\equiv 0$ it follows that either
$s_p$ and $s_q$ are both zero or both non-zero.
Suppose  $s_p\not\equiv 0$ and $s_q\not\equiv 0$.
Write $s_p(z)=z^{m_p}\tilde{s}_p(z)$ and
$s_q(z)=z^{m_q}\tilde{s}_q(z)$ where $\tilde{s}_p(0)\neq 0$
and $\tilde{s}_q(0)\neq 0$. Since
$$
 z^{m_p}\tilde{s}_p(z)\bar{q}(z) = 
z^{m_q}\tilde{s}_q(z)\bar{p}(z)
$$
and also $\bar{q}(0)=\bar{p}(0)=1$, the lowest degree
term in the left and right hand side polynomials are $z^{m_p}$
and $z^{m_q}$ respectively, and therefore $m_p=m_q$.
Consequently,
$$
\tilde{s}_p(z)\bar{q}(z) = 
\tilde{s}_q(z)\bar{p}(z),
$$
and $\tilde{s}_p(0)=\tilde{s}_q(0)$. We can then
take $\tilde{p}(z)=\tilde{s}_p(z)/\tilde{s}_p(0)$
and $\tilde{q}(z)=\tilde{s}_q(z)/\tilde{s}_q(0)$. They
satisfy 
$$
\tilde{p}(z)\bar{q}(z) = 
\tilde{q}(z)\bar{p}(z),\qquad
\tilde{p}(0)=
\tilde{q}(0)=1,
$$
while $\Deg(\tilde{p})= \Deg(\tilde{s}_p)\leq\Deg(s_p)<\Deg(p)$
and similarly $\Deg(\tilde{q})<\Deg(q)\leq n_y$.
Hence $(\tilde{p},\tilde{q})$ is a polynomial solution
by Lemma \ref{pqexist}. It has degree strictly less than $(p,q)$, which
shows the first statement in (ii). If $\Deg(p)=\Deg(\bar{p})$
and $p\neq\bar{p}$ then there is no $r(z)$ satisfying the
requirements, showing the second statement in (ii).

\item[\rm (iii)] We finally let 
$r(z)$ be any polynomial with $\Deg(r)\leq\Deg(p)-\Deg(\bar{p})$
and $r(0)=1$. We then set
$\tilde{p}=\bar{p}r$
and $\tilde{q}=\bar{q}r$. These
polynomials trivially satisfy \eq{polsol2} and \eq{pqtaylor}.
Since $\Deg(\tilde{p})=\Deg(r)+\Deg(\bar{p})\leq \Deg(p)\leq n_x$
and 
$$
 \Deg(\tilde{q}) = 
\Deg(r)+\Deg(\bar{q}) \leq
\Deg(p)-\Deg(\bar{p})+\Deg(\bar{q})
 = \Deg(q)\leq n_y,
$$
they also satisfy \eq{polsol1} and thus are a polynomial
solution by Lemma \ref{pqexist}. In particular we can take
$r(z)$ of degree $m$.
\end{enumerate}\end{proof}

A solution to \eq{mom-olof2} can also be characterized in 
terms of the coefficient vectors. We have the following Proposition.

\begin{proposition}\label{pqcsol}
The pair $\cb=(c_0,\ldots,c_{n_x})^T\in\Real^{n_x+1}$
and $\db=(d_0,\ldots,d_{n_y})^T\in\Real^{n_y+1}$
is a coefficient solution to 
\eq{mom-olof2} if and only if 
\begin{itemize}
\item[\rm (i)] $c_0=1$, 
\item[\rm (ii)] $\cb$ is in the null-space of $A$, 
\item[\rm (iii)]
\be{cad-relation}
  d_{k} = \sum_{j=0}^{\min(k,n_x)} c_ja_{k-j}, \qquad k=0,\ldots, n_y.
\ee
\end{itemize}
\end{proposition}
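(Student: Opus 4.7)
The plan is to route everything through Lemma \ref{pqexist}, which says $(\cb,\db)$ is a coefficient solution precisely when $(P_{c},P_{d})$ satisfies \eqtwo{polsol1}{polsol2} together with the Taylor identity
\[
\frac{P_{d}(z)}{P_{c}(z)}=a_0+a_1z+\cdots+a_Kz^K+O(z^{K+1}).
\]
The degree bounds \eq{polsol1} are already built into the sizes of $\cb\in\Real^{n_x+1}$ and $\db\in\Real^{n_y+1}$, so only the normalization \eq{polsol2} and the Taylor identity remain to be translated into linear conditions on the coefficient vectors.

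Since $P_{c}(0)=1$ makes $P_{c}$ a unit in the local power-series ring at zero, the Taylor identity is equivalent to the polynomial congruence $P_{d}(z)\equiv P_{c}(z)\sum_{j\ge 0}a_jz^j\pmod{z^{K+1}}$. By the Cauchy product of Lemma \ref{Taylor}, the coefficient of $z^k$ in $P_{c}(z)\sum_j a_jz^j$ is $\sum_{j=0}^{\min(k,n_x)}c_ja_{k-j}$. I would then match coefficients of this congruence in two blocks separated at $k=n_y$. For $0\le k\le n_y$ the coefficient of $z^k$ in $P_{d}$ is $d_k$, so vanishing of the $z^k$-coefficient of the difference is exactly relation (iii); in particular at $k=0$ it reads $d_0=c_0a_0=c_0$, so (i) combined with (iii) delivers both halves of \eq{polsol2}. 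For $n_y<k\le K=n_y+n_x$ the coefficient of $z^k$ in $P_{d}$ is zero by the degree bound, and vanishing becomes $\sum_{j=0}^{n_x}c_ja_{k-j}=0$ (the $\min(k,n_x)$ truncation extends harmlessly to $n_x$ via the convention $a_\ell=0$ for $\ell<0$). Setting $i=k-n_y\in\{1,\ldots,n_x\}$, these $n_x$ equations are exactly the rows of $A\cb=\zero$ read off the definition of $A$ in \eq{Amdef}.

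With this dictionary in place, both directions are immediate. If $(\cb,\db)$ is a coefficient solution, Lemma \ref{pqexist} supplies the Taylor identity; (i) is the normalization $P_{c}(0)=1$, the first block delivers (iii), and the second block delivers (ii). Conversely, given (i)-(iii), the first block shows the $z^k$-coefficients of $P_{d}$ and of $P_{c}(z)\sum_j a_jz^j$ agree for $k\le n_y$, while (ii) shows they agree (both being zero) for $n_y<k\le K$; dividing by $P_{c}$, which is invertible near zero by (i), recovers \eq{pqtaylor}, and Lemma \ref{pqexist} closes the argument.

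The only care needed is the index bookkeeping -- matching the $(i,j)$-entry $a_{n_y+i-j}$ of $A$ to the $z^{n_y+i}$-coefficient of the Cauchy product, and using the $a_\ell=0$ convention to reconcile the $\min(k,n_x)$ truncation in (iii) with the full sum $\sum_{j=0}^{n_x}$ in (ii). There is no conceptual obstacle; the proposition is essentially a translation lemma between the power-series characterization of Lemma \ref{pqexist} and the linear-algebraic encoding used by the algorithms of Section \ref{sec:algo}.
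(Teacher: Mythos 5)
Your proof is correct and follows essentially the same route as the paper's: both arguments pass through Lemma \ref{pqexist} and Lemma \ref{Taylor}, equate the Taylor coefficients of $P_d$ with those of the Cauchy product $P_c(z)\sum_j a_jz^j$, and then read off (iii) from the block $0\le k\le n_y$ and (ii) from the block $n_y<k\le K$. The paper phrases it by extending $c_k,d_k$ by zeros and writing the single relation $d_k=\sum_{j=0}^k c_ja_{k-j}$ for $k=0,\ldots,K$; your "unit in the local ring / congruence mod $z^{K+1}$" framing is a cosmetic variant of the same bookkeeping.
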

\begin{proof}
Suppose first that $\cb$ is in the null-space of 
$A$, $c_0=1$ and $\{d_k\}$ is given
by \eq{cad-relation}. Extend the coefficient sequences by
setting $c_k=0$ for $k>n_x$ and
$d_k=0$ for $k>n_y$. Since $\cb$ is in the null-space of $A$, 
we get 
$\sum_{j=0}^{k} c_j a_{k-j}=0$ 
when $n_y+1\leq k\leq n_x+n_x=K$,
and in conclusion 
\be{cad-relation2} 
  d_{k} = \sum_{j=0}^{k} c_ja_{k-j}, \qquad k=0,\ldots, K.
\ee
Upon noting that $\{c_{k}\}_{k=0}^\infty$ and
$\{d_{k}\}_{k=0}^\infty$ are the Taylor coefficients of
$P_c$ and $P_d$, and since $P_c(0)=c_{0}=1$, $P_d(0)=d_{0}=a_0c_{0}=1$,
Lemma \ref{Taylor} shows that
\be{qpa}
  P_d(z) = P_c(z)\left[a_0 +a_1z +\cdots + a_Kz^K + O\left(z^{K+1}\right)\right],
\ee
and by Lemma \ref{pqexist} we have that $(P_c,P_d)$
is a solution to \eq{mom-olof2}. Conversely, if $(P_c,P_d)$
is a solution, then $c_0=P_c(0)=1$ and
by Lemma \ref{Taylor} we get that
\eq{cad-relation2} holds.
For $k=n_y+1,\ldots, K$ this also implies
that $\cb$ is in the null-space of $A$.
\end{proof}

The final Proposition of this section relates the
degree of the solution to the column vectors
of $A$ and the linear spaces they
span.

\begin{proposition}\label{avectors}
Let $V_j=\Span\{\a_1,\ldots,\a_j\}$ and 
$V^0_j=\Span\{\a_0,\ldots,\a_j\}$. Set $V_0=V^0_{-1}=\emptyset$.
Then
\begin{itemize}
\item[{\rm (i)}]
There is a solution if and only if $\a_0\in V_{n_x}={\rm Range}(A_1)$.
\item[{\rm (ii)}]
There is a solution of degree $j\geq 0$ if and only if
\be{jsol}
   \a_{0}\in V_j,  \quad {\rm and} \quad
   \a_{j}\in V^0_{j-1}.
\ee
\item[{\rm (iii)}]
When $\a_0\in V_{n_x}$ then
$$
   \a_0\in V_d, \qquad V^0_d=V_d,
$$
if and only if $d\geq\Dmax$.
\item[{\rm (iv)}]
When $\a_0\in V_{n_x}$ the vectors
$$
  \a_1,\ldots,\a_{\Dmin},
$$
(when $\Dmin>0$) 
$$
  \a_{\Dmax+1},\ldots,\a_{n_x},
$$ 
(when $\Dmax<n_x$),
are all linearly independent. Moreover,
$$
  \a_j \in V_{\Dmin}, \quad V_j=V_{\Dmin},
\qquad j=\Dmin,\ldots,\Dmax.
$$
\end{itemize}
\end{proposition}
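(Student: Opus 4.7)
The plan is to use Proposition \ref{pqcsol} throughout as the bridge between solutions and vectors $\cb$ satisfying $A\cb=0$, $c_0=1$, reducing every claim to a linear-algebra statement about the columns $\a_0,\ldots,\a_{n_x}$. For (i), $A\cb=0$ with $c_0=1$ reads $\a_0=-\sum_{k\ge 1}c_k\a_k$, so existence is exactly $\a_0\in V_{n_x}={\rm Range}(A_1)$. For (ii), the forward direction reads the two containments off the null-space vector of a degree-$j$ solution (which has $c_0=1$, $c_j\ne 0$, $c_k=0$ for $k>j$). For the converse I would split on whether $\a_0\in V_{j-1}$: if not, any expansion $\a_0=-\sum_{k=1}^{j}c_k\a_k$ automatically has $c_j\ne 0$; if so, I combine the null-space vector arising from an expansion of $\a_0$ in $V_{j-1}$ with the one arising from $\a_j=\beta_0\a_0+\sum_{k<j}\beta_k\a_k$, choosing scalars so that the combination has $c_0=1$ and $c_j\ne 0$.

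The heart of the proposition is (iv). I would first use part (ii) to produce a minimal-degree polynomial solution $(p^*,q^*)$ with $\Deg p^*=\Dmin$, and observe $\gcd(p^*,q^*)=1$ since otherwise dividing through by the gcd would yield a strictly smaller solution via Lemma \ref{pqexist}. Linear independence of $\a_1,\ldots,\a_{\Dmin}$ then splits into two cases. A dependency $\sum_{k=1}^{\Dmin}\gamma_k\a_k=0$ with $\gamma_{\Dmin}\ne 0$ can be subtracted from the minimal-degree null-space vector $\cb^*$ (for which $c^*_{\Dmin}\ne 0$) to eliminate $\a_{\Dmin}$, producing $\a_0\in V_{\Dmin-1}$ and contradicting the definition of $\Dmin$. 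A dependency supported on indices $k^*<\Dmin$ translates, via the Toeplitz structure of $A$ and the identity $\sum a_k z^k=q^*(z)/p^*(z)$ of Lemma \ref{aexponent}, into a polynomial equality $\nu(z)q^*(z)=p^*(z)\tilde q(z)$ after multiplying the power-series identity through by $p^*(z)$ and invoking the degree bound $\deg(\nu q^*)<K$; coprimeness of $p^*$ and $q^*$ then forces $p^*\mid\nu$, contradicting $\deg\nu=k^*<\Dmin=\deg p^*$. For the stabilisation $\a_j\in V_{\Dmin}$ on $[\Dmin+1,\Dmax]$, comparing Taylor coefficients in $p^*(z)\sum a_k z^k=q^*(z)$ yields the recursion $\sum_{k=0}^{\Dmin}c^*_k a_{n-k}=0$ valid for $n>\max(\Dmin,\Deg q^*)$, which applied entrywise to the columns of $A$ gives $\sum_{k=0}^{\Dmin}c^*_k\a_{j+k}=0$; since $c^*_{\Dmin}\ne 0$, an induction starting at $j=1$ delivers $\a_j\in V_{\Dmin}$ throughout the claimed range. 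Independence of $\a_{\Dmax+1},\ldots,\a_{n_x}$ is then a dimension count: $\dim V_{n_x}=\Rank A_1=n_x-(\Dmax-\Dmin)$ must be reached in $n_x$ steps, and the preceding two pieces account for $\Dmin$ independent vectors on $[1,\Dmin]$ and zero new dimensions on $[\Dmin+1,\Dmax]$, forcing each of the remaining $n_x-\Dmax$ steps to contribute a fresh dimension.

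Part (iii) then follows from (iv) by inspection of the chain $V_0\subsetneq\cdots\subsetneq V_{\Dmin}=\cdots=V_{\Dmax}\subsetneq\cdots\subsetneq V_{n_x}$, the equivalence $V^0_d=V_d\iff\a_0\in V_d$, and the defining formula $\Dmax=\Dmin+n_x-\Rank A_1$. The main obstacle throughout is the second sub-case of the linear-independence argument in (iv): connecting the position of $\Dmin$ in the span chain to the algebraic minimality of $\deg p^*$ through coprimeness of $(p^*,q^*)$ is the only step that leaves pure linear algebra and genuinely uses the rational-function content of Lemma \ref{aexponent}.
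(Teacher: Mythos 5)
Your proof of (i) and (ii) is essentially the paper's: translate through Proposition~\ref{pqcsol} into linear-algebra statements about the null space of $A$. The substantive disagreement is in (iv), and there is a gap in the stabilisation step that I think is genuine.

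For the containments $\a_j\in V_{\Dmin}$ on $j=\Dmin+1,\ldots,\Dmax$ you invoke the recursion $\sum_{k=0}^{\Dmin}c^*_ka_{n-k}=0$, valid for $\max(\Dmin,\Deg q^*)<n\le K$, and apply it entrywise to the columns $\a_{j},\ldots,\a_{j+\Dmin}$. The lowest entry index involved is $n_y+1-j$, so the recursion only gives $\sum_k c^*_k\a_{j+k}=0$ for $j\le n_y-\Deg q^*$. To cover the whole range $j\le\Dmax-\Dmin$ you need $\Deg q^*\le n_y-(\Dmax-\Dmin)=n_y-n_x+\Rank A_1$. But that bound on $\Deg q^*$ is exactly the content of Theorem~\ref{existence}(iii), which the paper derives \emph{from} Proposition~\ref{avectors}(iv); using it here is circular. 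With only the a priori bound $\Deg q^*\le n_y$ the recursion extends no further than $j=0$. The paper avoids this: it lets $D^*$ be the largest degree of an existing solution, uses Proposition~\ref{pqunique}(iii) to get solutions of every degree in $[\Dmin,D^*]$, reads off $\a_j\in V^0_{j-1}$ from (ii) and then $V^0_{j-1}=V_{j-1}$ from (iii), which gives the chain $V_{\Dmin}=\cdots=V_{D^*}$ without any control on $\Deg q^*$; the dimension count you describe then pins down $D^*=\Dmax$. You would need something like this detour.

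Two smaller remarks. For the linear independence of $\a_1,\ldots,\a_{\Dmin}$ your Case~2 (dependency with $\gamma_{\Dmin}=0$, resolved via coprimeness of $(p^*,q^*)$ and a degree comparison in $\nu q^*=p^*\tilde q$) is correct but heavier than necessary: any nonzero dependency $\bgamma$ combines with $\cb^*$ to give a second null vector with leading coordinate $1$; either its degree drops below $\Dmin$ (direct contradiction with the minimality of $\Dmin$), or it has degree $\Dmin$ and differs from $\cb^*$, whereupon Proposition~\ref{pqunique}(ii) manufactures a lower-degree solution. That is the paper's unified argument, and it also sidesteps the fact that $\sum a_kz^k=q^*/p^*$ only holds through order $z^K$ (a subtlety your Case~2 elides but which can be patched as you suggest). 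Finally, be aware that the statement of (iii) as printed ($d\ge\Dmax$) does not match the paper's own proof of (iii), which establishes the equivalence with $d\ge\Dmin$; the paper's subsequent uses of (iii), and the equivalence $V^0_d=V_d\iff\a_0\in V_d$ that you yourself note, all point to $\Dmin$ being the intended threshold, so your invocation of the $\Dmax$ formula in deducing (iii) from (iv) is a loose end rather than a derivation.
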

\begin{proof}
\begin{itemize}
\item[{\rm (i)}]
By Proposition \ref{pqcsol} there exists a solution to \eq{mom-olof2}
if and only if there is a coefficient vector $\cb=(1,\; \cb')^T$
in the null-space of $A$, i.e.
$$
    A\cb = A_1\cbar + \a_0 = 0.
$$
But such a vector $\cbar$ exists if and only if $\a_0$ is
in the range of $A_1$. This shows (i).
\vskip 3 mm

\item[{\rm (ii)}]
Again by Proposition \ref{pqcsol} there is a solution of degree $j$ if
and only if there is a vector  $\cb=(c_0,c_1,\ldots,c_j,0,\ldots,0)^T$ such that
\be{ccond}
  0 = A\cb = c_0\a_0 + c_1\a_1 + \cdots + c_j\a_j,
\ee
with $c_j\neq 0$ and $c_0=1$. For $j=0$ this is clearly equivalent
to $\a_0=0$ or $\a_0\in V_0=V^0_{-1}$.
For $j>0$
the existence of $c_j$-coefficients satisfying \eq{ccond}
is equivalent to the left condition in \eq{jsol}. 
Moreover, if $\a_j\neq V^0_{j-1}=\Span\{\a_0,\ldots,\a_{j-1}\}$, 
then we must have $c_j=0$
to satisfy \eq{ccond}, and $\cb$ cannot represent a solution of degree $j$.
On the other hand, if $c_j=0$ and $\a_j=c_0'\a_0 +\cdots + c'_{j-1}\a_{j-1}$
for some non-zero coefficients $c_k'$, then $\a_0 +c_1''\a_1+\cdots +c_{j-1}''\a_{j-1}+\a_j=0$,
with $c_k''=(1+c_0')c_k-c_k'$, represents a solution of degree $j$. 
This shows (ii).

\vskip 3 mm
\item[{\rm (iii)}]The statement is obvious in case $\Dmin=0$.
If $\Dmin>0$ there are scalars such that
\be{vdef}
  \a_0 = v_1\a_1 +\cdots +v_{\Dmin}\a_{\Dmin},
\ee
by \eq{Dmin}. Hence, $\a_0\in V_{\Dmin}$ and since the
$V_j$ spaces are nested, $V_{j}\subset V_{j+1}$,
we have $\a_0\in V_d$ for $d\geq \Dmin$.
Moreover, the minimal property of $\Dmin$ ensures that $v_{\Dmin}\neq 0$
in \eq{vdef}, so that $\a_0\not\in V_d$ when $d<\Dmin$.
\vskip 3 mm

\item[{\rm (iv)}]To show that when $\Dmin>0$ the vectors
$\a_1,\ldots,\a_{\Dmin}$ are linearly independent, we use 
\eq{vdef} and note that $P_c(z)$ with
$\cb = (1,-v_1,\ldots,-v_{\Dmin},0,\ldots,0)^T$ is a polynomial
solution to \eq{mom-olof2}.
Suppose now that the there are non-zero coefficients $c'_j$ such that
$$
   c'_1\a_1 + \cdots + c'_{\Dmin}\a_{\Dmin} =0.
$$
Then $P_{c'}$ with
$\cb' = (1,c'_1-v_1,\ldots,c_{\Dmin}-v_{\Dmin},0,\ldots,0)^T$ 
is another polynomial solution to \eq{mom-olof2}.
Moreover, by the minimality property of $\Dmin$ we
must have $c_{\Dmin}-v_{\Dmin}\neq 0$ and therefore
$\Deg(P_c)=\Deg(P_{c'})=\Dmin$. But
by (ii) in Proposition \ref{pqunique} this implies that there
is yet another solution $P_{c''}$ of degree strictly
less than $\Dmin$. Hence, there are coefficients $c_j''$ such that
$$
  \a_0 + c_1''\a_1 +\cdots +c_{d}''\a_{d}=0,
$$
with $d<\Dmin$, contradicting \eq{Dmin}. The vectors must therefore be
linearly independent.

Suppose $D^*\geq \Dmin$ is the highest degree of an existing solution.
Since $P_c(z)$ is a solution of degree $\Dmin$ we get from (iii) in
Proposition \ref{pqunique} that there are solutions of all intermediate degrees
$\Dmin,\ldots, D^*$. Hence, from (ii), $\a_{j}\in V^0_{j-1}$ for
$j=\Dmin,\ldots, D^*$ and from (iii)
$\a_{j}\in V_{j-1}$ for
$j=\Dmin+1,\ldots, D^*$. Noting that
if $\a_{j+1}\in V_{j}$ then $V_j=V_{j+1}$ we can
conclude inductively that
$V_{\Dmin}=\cdots=V_{D^*}$ and $\a_j\in V_{\Dmin}$ for $j=\Dmin,\ldots,D^*$.
We now have three different cases:
\begin{enumerate}
\item If $D^*=n_x$ then $V_{\Dmin}=V_{n_x}$ and by \eq{Dmax} we get
$D^*=\Rank A_1-\Dmin+\Dmax 
= {\rm dim}\;V_{n_x}-\Dmin+\Dmax 
= {\rm dim}\;V_{\Dmin}-\Dmin+\Dmax = \Dmax$
since either $\a_1,\ldots,\a_{\Dmin}$ are linearly independent or
 $\Dmin=0$ and $V_{\Dmin}=\emptyset$. This 
shows (iv) for $D^*=n_x$.
\item If $D^*<n_x$ and $\Dmin=0$ then $V_{\Dmin}=V_{D^*}=\emptyset$
and
\be{Vnx1}
    V_{n_x} = \Span\{\a_{D^*+1},\ldots, \a_{n_x}\}.
\ee
Suppose
there are non-zero coefficients $\alpha_k$ such that
$$
  \alpha_{D^*+1}\a_{D^*+1}+\cdots +
  \alpha_{n_x}\a_{n_x} =0,
$$
and let $k^*$ be the highest index of all non-zero coefficients,
$\alpha_{k^*}\neq 0$. Then $\a_{k^*}\in V^0_{k^*-1}$ and
there is a solution of degree $k^*$ by (ii), a contradiction
to the definition of $D^*$. Hence, the vectors in \eq{Vnx1} must
be linearly independent and
$$
 D^* = n_x - {\rm dim}\;V_{n_x} = \Dmin +n_x-
\Rank A_1 =  \Dmax,
$$
showing (iv) for this case.

\item If $D^*<n_x$ and $\Dmin>0$ we have 
\be{Vnx}
    V_{n_x} = \Span\{\a_1,\ldots,\a_{\Dmin},\a_{D^*+1},\ldots, \a_{n_x}\}.
\ee
Suppose there are non-zero coefficients $\alpha_k$ such that
$$
  \alpha_1\a_1 + \cdots +
  \alpha_{\Dmin}\a_{\Dmin} + \cdots +
  \alpha_{D^*+1}\a_{D^*+1}+\cdots +
  \alpha_{n_x}\a_{n_x} =0.
$$
Since $\a_1,\ldots,\a_{\Dmin}$ are linearly independent
at least one $\alpha_k$ with $k> D^*$ must be non-zero.
By the same argument as above in case two
we then get a contradiction and 
the vectors in \eq{Vnx} must be linearly independent. Hence,
$$
 D^* = \Dmin + n_x - {\rm dim}\;V_{n_x} = \Dmin +n_x-
\Rank A_1 =  \Dmax,
$$
showing this final case.
\end{enumerate}
\end{itemize}
\end{proof}


\subsection{Proof of Theorem \ref{existence}}
\label{T2proof}

To prove Theorem \ref{existence} we essentially
have to combine the results from Propositions \ref{pqunique}
and \ref{avectors}. The statement (i) is given directly
by (i) in the latter. For the remaining points we have:
\vskip 3 mm

\begin{itemize}
\item[{\rm (ii)}] From (ii) in Proposition \ref{avectors} we see
that $\a_0\in V_d$ and $\a_d\in V^0_{d-1}$. 
It follows from (iii) in Proposition \ref{avectors} that $d\geq \Dmin$.
On the other hand,
if $\Dmax<n_x$ and $d>\Dmax$ it says that $V_{d-1}^0=V_{d-1}$.
Hence, $\a_d\in V_{d-1}$ which contradicts the linear independence
of $\a_{\Dmax},\ldots,\a_{n_x}$ established in point
(iv) of Proposition \ref{avectors}. 

\vskip 3 mm
\item[{\rm (iii)}] We note that by \eq{Dmin} 
there are scalars $v_1,\ldots,v_{\Dmin}$ such
\be{vdef2}
  \a_0 = v_1\a_1 +\cdots +v_{\Dmin}\a_{\Dmin}.
\ee
Hence, $\a_0\in V_{\Dmin}$ and since $v_{\Dmin}\neq 0$,
we also have $\a_{\Dmin}\in V^0_{\Dmin}$. By (ii) in Proposition \ref{avectors}
there is thus a solution of degree $\Dmin$ which we denote
$(p^*,q^*)$. Since
$\a_1,\ldots,\a_{\Dmin}$ are linearly independent by
(iii) in Proposition \ref{avectors},
the coefficients in \eq{vdef2} are unique and therefore
also the $\Dmin$-degree solution is unique.
Moreover, suppose that $x_j=y_i=x^*\neq 0$ for some $i,j$. Then
$p^*$ and $q^*$ would have a common factor $(1-zx^*)$,
and by Lemma \ref{pqexist} also $\bar{p}(z):={p^*}(z)/(1-zx^*)$
and $\bar{q}(z):={q^*}(z)/(1-zx^*)$ would be a solution.
But this is impossible since $\Deg(\bar{p})<\Deg(p^*)=\Dmin$.
By (iv), shown below, a solution is given by $(p^*r,q^*r)$ where
$r(0)=1$ and $\Deg(r)=\Dmax-\Dmin$. Hence 
$n_y\geq\Deg(q^*r)=\Deg(q^*) + n_x-\Rank A_1$. 
Suppose finally that $\Dmin<\Rank A_1$ and that 
$\Deg(q^*)< n_y-n_x+\Rank A_1$. Let $\Deg(r)=\Dmax+1-\Dmin$.
Then $(p^*r,q^*r)$ is still a solution by Lemma \ref{pqexist} since $(p^*,q^*)$
is a solution, $\Deg(p^*r)=\Dmax+1=n_x+\Dmin+1-\Rank A_1 \leq n_x$ and
$$
  \Deg(q^*r) < n_y-n_x+\Rank A_1 + \Dmax+1-\Dmin = n_y+1.
$$
This contradicts (ii) and therefore 
$\Deg(q^*)= n_y-n_x+\Rank A_1$, concluding the
proof of (iii).

\vskip 3 mm
\item[{\rm (iv)}] We first note that there exists a solution
of degree $\Dmax$ by Proposition \ref{avectors}
since if $\Dmax>\Dmin$ we have
$\a_0\in V_{\Dmax-1}^0$ and 
$\a_{\Dmax}\in V_{\Dmin}=V_{\Dmax-1}=V^0_{\Dmax-1}$.
Hence, (iii) in Proposition \ref{pqunique} shows that any
polynomial pair of the stated type is a solution. On
the other hand, if the polynomial solution is not
of this type, then (ii) in Proposition \ref{pqunique}
says there is a solution of degree strictly less
than $\Dmin$, contradicting (ii) above.

\vskip 3 mm

\item[{\rm (v)}] We suppose first that $A_1$ is non-singular.
Then rank\;$A_1=n_x$ so that $\Dmin=\Dmax$ and the uniqueness
is given by (iii) above. If, on the contrary, $A_1$ is singular then
$\Dmax>\Dmin$ and since we can then pick infinitely many
polynomials $r(z)$ in (iv), we have infinitely many solutions.

\vskip 3 mm

\item[{\rm (vi)}] This is a consequence of (iv). The solution can
be represented by $(p^*r,q^*r)$ for some polynomial
$r(z)$ with $r(0)=1$.
Let $1/x_j$ for $j=1,\ldots,\Dmin$
and $1/y_j$ for $j=1,\ldots,\Deg(q^*)$ be the roots of $p^*(z)$
and $q^*(z)$ respectively. Let $1/z_j$ for $j=1,\ldots,\Deg(r)$
be the roots of $r(z)$. Then 
$$
   m_k = \sum_{j=1}^{\Dmin}x_j^k + \sum_{j=1}^{\Deg(r)}z_j^k 
-\sum_{j=1}^{\Deg(q^*)}y_j^k - \sum_{j=1}^{\Deg(r)}z_j^k =
\sum_{j=1}^{\Dmin}x_j^k -\sum_{j=1}^{\Deg(q^*)}y_j^k,
$$
which is independent of $r(z)$ and uniquely determined
because $(p^*,q^*)$ is unique.\end{itemize}

\section{Proof of Theorem \ref{solvalgo}}
\label{sec:T1proof}

We can now use the results in Section \ref{sec:exist} to
prove Theorem \ref{solvalgo}.

\begin{itemize}
\item[{\rm (i-ii)}]To show the statements about Algorithms \ref{algo1} and 
\ref{algo2} we consider the reduced problem
\be{mom-red}
  m_k = \sum_{j=1}^{\tilde{n}_x} \tilde{x}_{j}^k- \sum_{j=1}^{\tilde{n}_y} \tilde{y}_{j}^k,
\qquad k=1,\ldots, \tilde{K},
\ee
where $\tilde{n}_x=\Rank A_1\leq n_x$, $\tilde{n}_y=n_y-n_x+\tilde{n}_x\leq n_y$
and $\tilde{K}=\tilde{n}_x+\tilde{n}_y\leq K$. The moments
$m_k$ in the left hand side are the same as in \eq{mom-olof2}. 
First, we consider the minimal solution $(p^*,q^*)$ of \eq{mom-olof2}.
By (iv) in Proposition \ref{avectors}
we must have $\Deg(p^*)=\Dmin\leq\Rank A_1=\tilde{n}_x$.
Moreover, by (iii) in Theorem \ref{existence},
$$
  \Deg(q^*)\leq n_y-n_x+\Rank A_1 = \tilde{n}_y.
$$
It follows from Lemma \ref{pqexist} that $(p^*,q^*)$  
is also a solution to \eq{mom-red}. Second, let
$(\tilde{p}^*,\tilde{q}^*)$ be the minimal degree
solution to \eq{mom-red}. Then by (iv) in Theorem \ref{existence}
there is a polynomial $r(z)$ with $r(0)=1$
such that $p^*=\tilde{p}^*r$
and $q^*=\tilde{q}^*r$. But then $(\tilde{p}^*,\tilde{q}^*)$
is also a solution to \eq{mom-olof2} by Lemma \ref{pqexist}.
By the uniqueness of the minimal degree solution of \eq{mom-olof2}
it follows that $r\equiv 1$ and $p^*=\tilde{p}^*$,$q^*=\tilde{q}^*$.
Suppose now that there is another polynomial $r(z)$
with $r(0)=1$, $\Deg(r)>0$ 
such that $(p^*r,q^*r)$ is a solution to 
\eq{mom-red}. Then $\Deg(p^*r)= \Dmin + \Deg(r)\leq \tilde{n}_x=
\Rank A_1$. Hence, $\Dmin<\Rank A_1$ and therefore
by (iii) in Theorem \ref{existence} we have
$\Deg(q^*)=n_y-n_x+\Rank A_1=\tilde{n}_y$.
Thus, $\Deg(q^*r)> \tilde{n}_y$ which is impossible
if $(p^*r,q^*r)$ is a solution. Hence, 
$(p^*,q^*)$ is the unique solution to \eq{mom-red} and
therefore $\tilde{A}_1$ is non-singular by (v) in
Theorem \ref{existence}. 

Since $\tilde{A}_1$ is invertible, the
generalized eigenvalue problem \eq{geneig} and $\cb'$
are well-defined. Moreover, we can construct 
$\tilde{A}_1^{-1}\tilde{A}_0$. By \eq{cprimdef},
$$
  \tilde{A}_1^{-1}\tilde{A}_0 = 
\Vector{-c_1 & 1 & 0 & \cdots &0\\
        -c_2  & 0 & 1  & \cdots &0\\
         \vdots & \vdots & \ddots & \ddots &\vdots \\
        -c_{\tilde{n}_x-1}  & 0 & 0   & \ddots &1\\
        -c_{\tilde{n}_x} & 0 & 0  & \cdots & 0
},
$$
which is a companion matrix. It is well-known that for those matrices 
the elements in the first column are the coefficients of its
characteristic polynomial. This is shown as follows: let $M_{ij}$ be the minor of 
$V:=zI-\tilde{A}_1^{-1}\tilde{A}_0$, i.e. the determinant of the
matrix obtained by removing row $i$ and column $j$.
Then, the determinant can be expanded by minors, for any $j$,
$$
   \det(V) = \sum_{i=1}^{\tilde{n}_x} (-1)^{i+j} v_{ij} M_{ij},
\qquad V=\{v_{ij}\}.
$$
Taking $j=1$, we get 
$M_{i,1}=\det(\diag(z,\ldots,z,-1,\ldots,-1))$ with $i-1$ occurrences
of $-1$, so that $M_{i,1}= z^{\tilde{n}_x-i}(-1)^{i-1}$. Therefore,
$$\begin{array}{rcl}
  \det(V) & =& (-1)^2(c_1+z)M_{1,1} + 
\sum_{i=2}^{\tilde{n}_x}
 (-1)^{i+1} c_i M_{i,1} \\ 
& = & c_1 z^{\tilde{n}_x-1}+z^{\tilde{n}_x}+\sum_{i=2}^{\tilde{n}_x}c_i z^{\tilde{n}_x-i}\\
& = & P(z),
\end{array}$$
which is exactly \eq{charpol}.
This shows that the results of Algorithms \ref{algo1}
and \ref{algo2} are identical, since the generalized
eigenvalues in \eq{geneig} are exactly the roots of $P(z)$.

It remains to show what the roots are. 
Let $\tilde{A}=[\tilde{a}_0\; \tilde{A}_1]$ be the
$A$-matrix related to \eq{mom-red}. Clearly, $\cb=(1,\cb'^T)^T$
is in the null-space of $\tilde{A}$ and hence $P_c(z)$ is
the unique solution to \eq{mom-red}. But for $z\neq 0$,
\begin{align*}
  P(z) &=  c_{\tilde{n}_x} + c_{\tilde{n}_x-1}z + \cdots + 
c_{1}z^{\tilde{n}_x-1} + z^{\tilde{n}_x} \\
 & =  z^{\tilde{n}_x}\left(\frac{c_{\tilde{n}_x}}{z^{\tilde{n}_x}}
+ \frac{c_{\tilde{n}_x-1}}{z^{\tilde{n}_x-1}} +\cdots + 
\frac{c_{1}}{z}+
1\right)\\
&= z^{\tilde{n}_x}P_c(1/z)\\
& =z^{\tilde{n}_x}(1-x_1/z)(1-x_2/z)\cdots (1-x_{\Dmin}/z) \\
&=z^{\tilde{n}_x-\Dmin}(z-x_1)(z-x_2)\cdots (z-x_{\Dmin}),
\end{align*}
which extends to $z=0$ by continuity. This concludes
the proof of points (i) and (ii).

\item[{\rm (iii)}] 
Let $(p,q)$ be a polynomial solution to \eq{mom-olof2}
and $\cb$ the corresponding coefficient solution.
From Lemma \ref{aexponent} we have
$$
   q(z) = p(z)e^{m(z)},
$$
where $m(z)$ is defined in \eq{mdef}. For the $(K+1)$-th Taylor
coefficient of the left and right hand side we have by
Lemma \ref{aexponent} and Lemma \ref{Taylor},
\be{aKrel}
  0  = \sum_{j=0}^{n_x}c_ja_{K+1-j}
\quad\Rightarrow\quad 
  a_{K+1} =  -\sum_{j=1}^{n_x} a_{K+1-j}c_j,
\ee
since the $k$-th Taylor coefficient of $q$ and $p$ is zero
for $k>n_x$ and $k>n_y$ respectively.
Finally, the last row of \eq{adef} extended to size $K+1$ gives
$$
   m_{K+1} = (K+1)a_{K+1} - \sum_{j=1}^Km_{j}a_{K+1-j}.
$$
Together the last two equations show point (iii).
\end{itemize}

\section{Properties of $A_1$ and Markov's Theorem}
\label{sec:Markov}

We now look more in detail on the structure of the $A_1$ matrix.
In particular we look at the implications of $A_1R$ being
positive definite. Then we get an explicit simplified formula for the matrix
and our results also shed some light on the relationship of
our results to the classical Markov theorem on 
the existence and uniqueness of solutions to
the finite
moment problem \eq{integrel} discussed in the introduction.
For this we need to define the matrix
$$
R = \Vector{  & & 1\\
             & \ldots & \\
            1 &&  },
$$
and note that 
left (right) multiplication by $R$ reverses the order of rows (columns) of a matrix.
In our notation we can then formulate Markov's theorem as follows
\begin{theorem}[Markov]\label{markov}
Suppose $K=2n$ is even and $n=n_x=n_y$. There is a 
unique piecewise continuous
function $f(x)$ satisfying
\be{integrel2}
   m_k = k\int_{\Real} x^{k-1}f(x)dx, \qquad 0\leq f\leq 1,
\qquad k=1,\ldots, K,
\ee
if $A_1R$ is symmetric positive definite and the matrix
\be{Acond}
 \Vector{\a_0 &  A_1\\ a_{K+1} & \a_0^T}
\ee
is singular. This $f$ is of the form in \eqtwo{fMarkov}{interlace}.
\end{theorem}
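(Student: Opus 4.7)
The plan is to reduce the classical functional statement about $f$ to the algebraic system \eq{mom-olof2} via the rescaling $m_k \to k m_k$ and the ansatz \eq{fMarkov}, then verify that the branch values produced by Theorem \ref{existence} are in fact real and interlaced, so that the corresponding indicator function is a genuine density and gives the unique solution.

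First, I would use the structural hypothesis to set up existence and algebraic uniqueness. Since $A_1 R$ is symmetric positive definite and $R$ is its own inverse, $A_1 = (A_1 R) R$ is invertible. Hence $\a_0 \in \text{range}(A_1)$ trivially, and Theorem \ref{existence}(i),(v) gives a \emph{unique} polynomial solution $(p^*,q^*)$ of degree $\Dmin = n_x = n$ to \eq{mom-olof2}. Part (iii) of the same theorem additionally guarantees that the branch values satisfy $x_j \neq y_i$ for all $i,j$, so we have $2n$ distinct candidate numbers, a priori in $\Cnumbers$. Next, the bulk of the work is to argue that they actually lie in $\Real$ and interlace. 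Note that $(A_1R)_{ij} = a_{n_y - n_x - 1 + i + j}$, so $A_1 R$ is a Hankel matrix in the exponentially transformed sequence $\{a_k\}$; its positive definiteness means that the bilinear pairing $\langle u, v\rangle := u^T (A_1R) v$ is an inner product on $\Real^{n}$. Identifying vectors with polynomials of degree $< n$, this inner product is exactly the functional $L[u(z)v(z)]$ associated by Lemma \ref{aexponent} to the generating function $e^{m(z)} = q(z)/p(z)$ (or, after an appropriate change of variables, to the measure $f(x)dx$). By the classical Chebyshev--Markov theory of orthogonal polynomials on the line (see e.g. \cite{akhiezer,krein,CF}), positive definiteness of this Hankel form forces the denominator polynomial $p^*(z)$ --- equivalently, the characteristic polynomial $P(z)$ produced in Algorithm \ref{algo2} --- to have $n$ real simple roots, whence the $\{x_j\}$ are real and distinct. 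The symmetric argument applied with $m_k$ replaced by $-m_k$ (swapping the roles of $\{x_j\}$ and $\{y_j\}$, cf.\ Step 5 of Algorithm \ref{algo1}) gives reality and simplicity of the $\{y_j\}$. The interlacing $y_1 < x_1 < \cdots < y_n < x_n$ then follows from the Hermite--Biehler / Sturm-type principle applied to the rational function $q^*(z)/p^*(z) = e^{m(z)}$, whose logarithmic derivative is a Stieltjes (Herglotz) function thanks to Lemma \ref{aexponent} and the positivity of the underlying measure.

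Finally, I would invoke the singularity of the extended matrix \eq{Acond} to both fix the solution in the functional class and pin down uniqueness. The identity \eq{aKrel} from the proof of Theorem \ref{solvalgo}(iii) shows that the condition ``$(a_{K+1}, \a_0^T)$ lies in the row span of $[\a_0\;\;A_1]$'' is exactly the requirement that the recursion $a_{K+1} = -\sum c_j a_{K+1-j}$ continues to hold at step $K+1$; this is the precise algebraic statement that no additional pair of interval endpoints is required to represent the moment sequence, i.e.\ that the $n$-interval indicator $f(x) = \sum_{j=1}^n \chi_{[y_j, x_j]}(x)$ is the ``canonical representation'' in the Chebyshev--Markov--Krein sense. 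A direct check shows $k\int_{\Real} x^{k-1} f(x) dx = \sum_{j=1}^n (x_j^k - y_j^k) = m_k$ by \eq{mom-olof2}, so $f$ is indeed a solution of \eq{integrel2} with $0 \le f \le 1$. Uniqueness in the class of bounded measurable densities then follows from the classical Markov uniqueness theorem for canonical representations in the non-degenerate (strictly positive definite) case, which I would cite from \cite[Ch.~III--V]{krein} or \cite{akhiezer}. The main obstacle is the reality/interlacing step: translating the positive definiteness of the Hankel matrix $A_1R$, which is phrased in the exponentially transformed variables $\{a_k\}$, back into a positivity statement about the original moment functional so that the classical orthogonal-polynomial machinery applies cleanly; the rest of the proof is essentially a dictionary translation between the algebraic and functional formulations.
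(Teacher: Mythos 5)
The paper does not actually prove Theorem~\ref{markov}: it is stated as the \emph{classical} Markov theorem (hence the attribution in brackets), formulated in the paper's notation for comparison purposes and implicitly cited from \cite{akhiezer,ak-krein,krein}. What the paper \emph{does} prove is the Corollary at the end of Section~\ref{sec:Markov}, which is the corresponding statement about the algebraic system \eq{mom-olof2} rather than the functional problem \eq{integrel2}: namely, when $n_x=n_y$ there is a unique interlaced solution of \eq{mom-olof2} if and only if $A_1R$ is SPD, and the singularity of \eq{Acond} is automatic whenever a solution exists. So you are attempting to prove something the paper deliberately leaves to the literature.

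Regarding the substance of your argument: your outline parallels the Corollary's proof, but at the two points where the real work happens you substitute broad citations for concrete computations. First, you claim $\Dmin=n_x$ directly from invertibility of $A_1$, but Theorem~\ref{existence}(v) only gives $\Dmin=\Dmax$; to conclude $\Dmin=n_x$ (equivalently $\Deg(p^*)=n$) you also need the distinctness of the $n_x$ branch values, which in the paper comes from Theorem~\ref{SPDresults}, not from Theorem~\ref{existence} alone. Second, and more importantly, you correctly identify ``the main obstacle'' --- translating positivity of the Hankel form in the exponentially-transformed variables $\{a_k\}$ back to a statement usable by classical orthogonal-polynomial machinery --- but then you do not resolve it; you merely invoke Chebyshev--Markov theory, Hermite--Biehler, and Herglotz functions and assert the conclusion. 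The paper resolves this exact obstacle by a self-contained, elementary route: Lemma~\ref{Rationals} expresses $a_k$ as residue sums, Proposition~\ref{xjdistinct} factorizes $A_1R=VWV^T$, Theorem~\ref{SPDresults} shows SPD $\Leftrightarrow$ ($\{x_j\}$ distinct and $w_j>0$), and the Corollary's proof obtains the interlacing by the discrete sign argument $\sign(w_k)=(-1)^{m_k+k}$ with $m_k=\#\{j: y_j<x_k\}$, forcing $m_k=k$. Your appeal to a Stieltjes/Herglotz property of $q^*/p^*$ is plausible in spirit but would require just as much verification in these transformed variables as the Markov/orthogonal-polynomial citation you are trying to shortcut. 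In short: the structure of your proposal is reasonable, but the two steps you treat as citation targets (reality/simplicity of roots and interlacing) are precisely what the paper derives from scratch via Lemma~\ref{Rationals}--Theorem~\ref{SPDresults}, and without filling in the translation between the $\{a_k\}$-Hankel form and the classical moment Hankel form, the proof has a genuine gap at its center.
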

\begin{remark}
The theorem does not rule out other forms of $f(x)$ a priori,
and without the second  condition in \eq{Acond} such solutions are indeed possible.
It only considers the case $n_x=n_y$, i.e. problem \eq{mom-olof}, and says nothing
about the possibility of other solution types, e.g. when the $\{x_j\}$
and $\{y_j\}$ are not interlaced as in \eq{interlace}.
\end{remark}

We start by introducing some new notation that will be used
throughout this section. If $\{x_j\}$ and $\{y_j\}$ is a
 solution of \eq{mom-olof2} and
$(p,q)$ is the corresponding polynomial solution
as defined in \eqtwo{Pc}{Pd}, we can introduce the
new polynomials
$p_r(z) = z^{n_x}p(1/z)$ and $q_r(z)=z^{n_y}q(1/z)$
to describe the solution. Defining them by continuity at $z=0$,
we have
\be{prqr}
   p_r(z) = (z-x_1)\cdots(z-x_{n_x}),\qquad q_r(z) = (z-y_1)\cdots(z-y_{n_y}).
\ee
Furthermore, we assume that the number of {\em distinct} roots 
of $p_r$ ($x_j$-branch values) is $\tilde{n}$.
We also order the roots such that we can write
$$
   p_r(z) = (z-x_1)^{1+\eta_1}(z-x_2)^{1+\eta_2}\cdots(z-x_{\tilde{n}})^{1+\eta_{\tilde{n}}},
$$
where $1+\eta_j$ is the multiplicity of the root $x_j$, so that
$$
   n_x = \Deg(p_r) = \tilde{n} + \sum_{\ell=1}^{\tilde{n}}\eta_\ell.
$$

We start the analyis with a Lemma giving explicit expressions for the
$a_k$ values.
\begin{lemma}\label{Rationals}
For $k\geq 0$,
\be{ares1}
a_{n_y-n_x+1+k} = \sum_{j=1}^{\tilde{n}}
\frac1{\eta!}\lim_{z\to x_j}\frac{d^{\eta_j}}{dz^{\eta_j}}\frac{(z-x_j)^{1+\eta_j}z^{k}q_r(z)}{p_r(z)}.
\ee
\end{lemma}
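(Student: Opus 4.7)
The plan is to recognise the right-hand side of \eq{ares1} as a sum of finite residues of the rational function $G(z) := z^k q_r(z)/p_r(z)$, apply the residue theorem on the Riemann sphere, and then identify the residue at infinity as $-a_{n_y-n_x+1+k}$ using Lemma \ref{aexponent}.

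For the first identification, each $x_j$ is a pole of $G$ of order at most $1+\eta_j$, so the standard formula for the residue at a pole of order $m$, namely ${\rm Res}_{z=z_0}F = \frac{1}{(m-1)!}\lim_{z\to z_0}\frac{d^{m-1}}{dz^{m-1}}\left[(z-z_0)^m F(z)\right]$, with $m = 1+\eta_j$, produces exactly the $j$-th term on the right-hand side of \eq{ares1} (interpreting the $\eta!$ there as $\eta_j!$, which is the evident intended reading). Since the distinct roots $x_1,\ldots,x_{\tilde n}$ of $p_r$ account for all finite poles of $G$, the residue theorem on the Riemann sphere gives
$$
   \sum_{j=1}^{\tilde n} {\rm Res}_{z=x_j} G(z) = -{\rm Res}_{z=\infty} G(z).
$$

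For the residue at infinity, I would substitute $z=1/w$. Combining the definitions $p_r(z)=z^{n_x}p(1/z)$ and $q_r(z)=z^{n_y}q(1/z)$ with Lemma \ref{aexponent} yields
$$
   G(1/w) = w^{n_x-n_y-k}\,\frac{q(w)}{p(w)} = w^{n_x-n_y-k}\sum_{\ell\geq 0} a_\ell w^\ell,
$$
which is meromorphic at $w=0$. The standard identity ${\rm Res}_{z=\infty}G(z) = -{\rm Res}_{w=0}[G(1/w)/w^2]$ then reads off the coefficient of $w^{n_y-n_x+k+1}$ in the Taylor series above, producing $-a_{n_y-n_x+1+k}$; together with the previous display this closes the argument.

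The main obstacle is the bookkeeping when some branch value $x_j$ vanishes, so that $z=0$ is itself a root of $p_r$ and $G$ may be regular or singular at the origin depending on whether $k\geq 1+\eta_j$. In this case the sum on the right of \eq{ares1} automatically contains ${\rm Res}_{z=0}G(z)$, which is precisely the contribution the residue theorem needs at that point, so no separate case analysis is required once one verifies that $\{x_1,\ldots,x_{\tilde n}\}$ always exhausts the finite poles of $G$. A similar sanity check handles negative indices: when $n_y-n_x+1+k<0$, the target coefficient in the Taylor series near $w=0$ simply does not exist, matching the convention $a_\ell=0$ for $\ell<0$. Beyond these bookkeeping points the argument is routine residue calculus.
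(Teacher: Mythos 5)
Your argument is essentially the same as the paper's: both hinge on the change of variable $z\mapsto 1/z$, on reading $a_\ell$ as Taylor coefficients of $q/p$ via Lemma~\ref{aexponent}, and on the residue formula for a pole of multiplicity $1+\eta_j$. The paper writes this as a Cauchy contour integral for $a_{n_y-n_x+1+k}$ pushed out to $C_{1/\varepsilon}$, whereas you package the same computation as the residue at infinity of $G(z)=z^kq_r(z)/p_r(z)$; the bookkeeping remarks about $x_j=0$ and negative indices are correct but do not change the substance.
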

\begin{proof}
This result follows from an application of the residue
theorem in complex analysis as follows. 
Let $C_r$ be the circle in the complex plane with
radius $r$.
Since the roots of $p(z)$ are non-zero, the function
$q/p$ is analytic within and on $C_\varepsilon$
if $\varepsilon$ is taken small enough, and
the Cauchy integral formula gives
$$
a_k =
\begin{cases}
\frac{1}{k!}\frac{d^k}{dz^k}\frac{q(z)}{p(z)}\Bigr|_{z=0},
& k\geq 0, \\
0, & k<0,
\end{cases}
=  
\frac1{2\pi i}\oint_{C_{\varepsilon}}\frac{q(z)}{p(z) z^{k+1}}dz. 
$$
Setting
\be{fdef}
  f(z) := \frac{q_r(z)}{p_r(z)} = \frac{z^{n_y-n_x}q(1/z)}{p(1/z)}.
\ee
and changing variable $z\to 1/z$ we get
$$
a_{n_y-n_x+1+k}
=\frac1{2\pi i}\oint_{C_{\varepsilon}}\frac{q(z)}{p(z) z^{n_y-n_x+k+2}}dz
= 
\frac1{2\pi i}\oint_{C_{\varepsilon}}\frac{f(1/z)}{z^{k+2
}}dz
= 
\frac1{2\pi i}\oint_{C_{1/\varepsilon}}z^{k}f(z)dz.
$$
Hence, $a_{n_y-n_x+1+k}$ is given by the sum of the residues 
of $z^{k}f(z)$ (assuming we take small
enough $\varepsilon$). By \eq{fdef} and the restriction $k\geq 0$
we see that its poles are located at
the $x_j$-values and they have multiplicities $1+\eta_j$ at $x_j$.
Then \eq{ares1} follows from the residue formula for a pole of 
a function $g(z)$ at $z^*$
with multiplicity $\eta+1$,
$$
 {\rm Res}(g,z^*) = 
\frac1{\eta!}\lim_{z\to z^*}\frac{d^{\eta}}{dz^{\eta}}(z-z^*)^{1+\eta}g(z).
$$
\end{proof}

When the branch values $\{x_j\}$ are {\em distinct} the
expression for the $a_k$ elements simplifies.
They can then be expressed as
sums of the powers of $\{x_j\}$ in a way similar to
the moments $m_k$, but with weights different from one.
We can also give a more
concise description of the matrices $A_0$ and $A_1$,
which can be factorized into a product of Vandermonde
and diagonal matrices. 
More precisely, we let $V$ be the Vandermonde matrix
$$
   V =\Vector{   1 & 1& \cdots & 1 \\
                  x_1 & x_2 & \cdots & x_{n_x} \\
                    x_1^2 & x_2^2 &\cdots & x_{n_x}^2 \\
                    \vdots & \cdots & \cdots & \vdots \\
                    x_1^{n_x-1} & x_2^{n_x-1} & \cdots & x_{n_x}^{n_x-1}},
$$
and introduce the diagonal matrices,
$$
W = \Vector{ w_1 & & \\
             & \ddots & \\
            &&  w_{n_x}
},
\qquad
X = \Vector{ x_1 & & \\
             & \ddots & \\
            &&  x_{n_x}
},
$$
where $w_j$ are the weights defined as
\be{weights}
   w_j = \frac{q_r(x_j)}{p'_r(x_j)}.
\ee
(Note that $p_r$ has only simple roots when $\{x_j\}$ are
distinct, so $p'_r(x_j)\neq 0$.)
Then we can show
\begin{proposition}\label{xjdistinct}
If $\{x_j\}$ are distinct, then for $k\geq 0$,
\be{ares2}
a_{n_y-n_x+1+k} = \sum_{j=1}^{n_x}w_j x_j^{k},
\ee
and
\be{A0A1mat}
   A_1R = VWV^T,\qquad A_0R=VWXV^T.
\ee
\end{proposition}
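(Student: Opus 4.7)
The plan is to derive \eq{ares2} as a direct specialization of Lemma \ref{Rationals} and then verify the two matrix identities by comparing entries.

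For \eq{ares2}, I would observe that when the branch values $\{x_j\}$ are distinct we have $\tilde n = n_x$ and $\eta_j = 0$ for all $j$, so the sum in \eq{ares1} reduces to a sum of simple residues. Each term is
\[
\lim_{z\to x_j}\frac{(z-x_j)z^k q_r(z)}{p_r(z)} = x_j^k q_r(x_j)\lim_{z\to x_j}\frac{z-x_j}{p_r(z)} = \frac{x_j^k q_r(x_j)}{p_r'(x_j)} = w_j x_j^k,
\]
where the last limit follows from the factorization \eq{prqr}: writing $p_r(z) = (z-x_j)\prod_{\ell\neq j}(z-x_\ell)$, the limit equals $\prod_{\ell\neq j}(x_j-x_\ell)^{-1} = 1/p_r'(x_j)$. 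Summing over $j$ yields \eq{ares2}.

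For the matrix identities, I would work entrywise. From \eq{Amdef} one has $(A_1)_{ij}=a_{n_y+i-j}$ and $(A_0)_{ij}=a_{n_y+i-j+1}$ for $i,j=1,\ldots,n_x$. Since right multiplication by $R$ reverses columns,
\[
(A_1R)_{ij}=a_{n_y-n_x+i+j-1},\qquad (A_0R)_{ij}=a_{n_y-n_x+i+j}.
\]
Applying \eq{ares2} with $k=i+j-2$ and $k=i+j-1$ respectively (both non-negative for $i,j\geq 1$) gives
\[
(A_1R)_{ij}=\sum_{\ell=1}^{n_x}w_\ell\,x_\ell^{i+j-2},\qquad (A_0R)_{ij}=\sum_{\ell=1}^{n_x}w_\ell\,x_\ell^{i+j-1}.
\]
On the other hand, with $V_{i\ell}=x_\ell^{i-1}$, a direct computation gives $(VWV^T)_{ij}=\sum_\ell w_\ell x_\ell^{i-1}x_\ell^{j-1}$ and $(VWXV^T)_{ij}=\sum_\ell w_\ell x_\ell^{i-1}x_\ell\cdot x_\ell^{j-1}$, which match the two formulas above. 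This proves \eq{A0A1mat}.

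There is no substantive obstacle: once \eq{ares2} is in hand from Lemma \ref{Rationals}, the matrix identities are just bookkeeping. The only place requiring care is the indexing convention relating the entries of $A_0, A_1$ to the $a_k$-sequence; one must keep track that $A_0$ and $A_1$ share all but the outermost columns of $A$, which explains the unit shift between the two factorizations in \eq{A0A1mat}.
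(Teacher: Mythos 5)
Your proof is correct and takes essentially the same route as the paper: \eq{ares2} is obtained by specializing Lemma \ref{Rationals} to the case $\tilde n = n_x$, $\eta_j = 0$ (simple poles), and \eq{A0A1mat} is then verified by index bookkeeping --- you match entries of $A_{1-r}R$ against $(VWX^rV^T)_{ij}$ directly, while the paper does the equivalent computation column by column, but the content is identical.
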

\begin{proof}
When $\{x_j\}$ are distinct $\eta_j=0$ for all $j$ and the
expression \eq{ares1} for the $x_j$-residue simplifies,
$$
 \lim_{z\to x_j}\frac{(z-x_j)z^{k}q_r(z)}{p_r(z)}
=\frac{x_j^{k}q_r(x_j)}{p'_r(x_j)}.
$$
This shows \eq{ares2}. For \eq{A0A1mat} we set $b_k=a_{n_y-n_x+1+k}$. Then
$$
   A_{1-r}R = \Vector{b_{r} & b_{r+1} & \hdots & b_{r+n_x} \\
 b_{r+1} & b_{r+2} & \hdots &b_{r+n_x+1} \\
            \vdots & \vdots & \ddots & \vdots \\
b_{r+n_x} & b_{r+n_x+1} & \hdots & 
b_{r+2n_x}
}\in\Real^{n_x\times n_x}, \qquad r=0,1.
$$
From \eq{ares2} we then have, for $k\geq 0$,
$$
\Vector{b_{k} \\ b_{k+1}\\ \vdots\\ b_{k+n_x} }
= \sum_{j=1}^{n_x}w_j
\Vector{x_j^{k} \\ x_j^{k+1}\\ \vdots\\ x_j^{k+n_x} }
= \sum_{j=1}^{n_x}w_jx_j^k
\Vector{1 \\ x_j\\ \vdots\\ x_j^{n_x} }
= V
\Vector{w_1x_1^k \\ w_2x_2^k\\ \vdots\\ w_{n_x}x_{n_x}^k }
= VW
\Vector{x_1^k \\ x_2^k\\ \vdots\\ x_{n_x}^k }.
$$
Consequently,
$$
   A_{1-r}R = VW
\Vector{x_1^{r} & x_1^{r+1} & \hdots & x_1^{r+n_x} \\
 x_2^{r} & x_2^{r+1} & \hdots &x_2^{r+n_x} \\
            \vdots & \vdots & \ddots & \vdots \\
x_{n_x}^{r} & x_{n_x}^{r+1} & \hdots & 
x_{n_x}^{r+n_x}
}
= VWX^rV^T,
$$
which concludes the proof.
\end{proof}

We now consider the implications of a positive
definite $A_1R$. It turns out that this is a necessary and sufficient
condition to guarantee both distinct $\{x_j\}$ values
and positive weights. 
We get

\begin{theorem}\label{SPDresults}
The matrix $A_1R$ is symmetric positive definite if and only if
$\{x_j\}$ are distinct and the weights
are strictly positive, $w_j>0$ for $j=1,\ldots, n_x$.
\end{theorem}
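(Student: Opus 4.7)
The plan is to establish the two directions separately. The ``if'' direction follows almost immediately from the factorization in Proposition \ref{xjdistinct}, while the ``only if'' direction requires recasting the quadratic form $u^T A_1 R u$ as a sum of residues via Lemma \ref{Rationals} and then exhibiting a test vector that defeats positive definiteness whenever the branch values $\{x_j\}$ are not distinct.

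For the ``if'' direction, assume the $\{x_j\}$ are distinct and $w_j>0$. Proposition \ref{xjdistinct} gives $A_1R=VWV^T$, which is visibly symmetric. The Vandermonde matrix $V$ is non-singular, so for any non-zero $u$ the vector $V^T u$ is non-zero, and
$$
u^T A_1 R u = (V^T u)^T W (V^T u) = \sum_{j=1}^{n_x} w_j(V^T u)_j^2 > 0.
$$

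For the ``only if'' direction, first observe that $A_1R$ is a Hankel matrix with entries $(A_1R)_{ij}=a_{n_y-n_x+1+i+j}$ for $0\le i,j\le n_x-1$. Associate $u=(u_0,\ldots,u_{n_x-1})^T$ with the polynomial $U(z)=\sum_{i=0}^{n_x-1}u_iz^i$. Using Lemma \ref{Rationals} together with the identity $\sum_{i,j}u_iu_jz^{i+j}=U(z)^2$ yields
$$
u^T A_1 R u = \sum_{\ell=1}^{\tilde n}\frac{1}{\eta_\ell!}\lim_{z\to x_\ell}\frac{d^{\eta_\ell}}{dz^{\eta_\ell}}\frac{(z-x_\ell)^{1+\eta_\ell}U(z)^2 q_r(z)}{p_r(z)},
$$
which is precisely the sum of residues of $U(z)^2 q_r(z)/p_r(z)$ at the distinct roots of $p_r$.

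The main obstacle is showing that positive definiteness forces $\tilde n=n_x$. Arguing by contradiction, suppose $\tilde n<n_x$, so that some multiplicity $1+\eta_{\ell_0}$ is at least $2$. The key step is to choose
$$
U(z) = (z-x_{\ell_0})^{\eta_{\ell_0}}\prod_{\ell\ne\ell_0}(z-x_\ell)^{1+\eta_\ell},
$$
a polynomial of degree exactly $n_x-1$, so its coefficient vector is non-zero. A direct cancellation gives $U(z)^2 q_r(z)/p_r(z)=(z-x_{\ell_0})^{\eta_{\ell_0}-1}\prod_{\ell\ne\ell_0}(z-x_\ell)^{1+\eta_\ell}q_r(z)$, which is a polynomial with no poles; hence every residue vanishes, $u^T A_1 R u=0$, contradicting positive definiteness. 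Therefore the $\{x_j\}$ are distinct, Proposition \ref{xjdistinct} yields $A_1R=VWV^T$, and choosing $u=(V^T)^{-1}e_j$ gives $w_j = u^T A_1 R u > 0$ for each $j$, completing the proof.
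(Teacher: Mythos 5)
Your proof is correct, and it reaches the same conclusion by the same general framework — re-express $u^T A_1 R u$ as a sum of residues of $U(z)^2 q_r(z)/p_r(z)$ at the distinct roots of $p_r$, then exhibit a test polynomial that kills positive definiteness when the $\{x_j\}$ are not distinct — but the choice of test polynomial differs. The paper chooses $P_v(z) = \prod_\ell (z-x_\ell)^{1+\lfloor\eta_\ell/2\rfloor}$, must first argue that this has degree $\le n_x-1$ (the condition \eq{sizecond}), conclude that the Rayleigh quotient vanishes because each summand is the $\eta_\ell$-th derivative of a function with a zero of order $>\eta_\ell$, and then close with a combinatorial inequality involving $\lfloor n/2\rfloor \le n-1$ to force all $\eta_\ell = 0$. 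You instead take $U(z)=(z-x_{\ell_0})^{\eta_{\ell_0}}\prod_{\ell\ne\ell_0}(z-x_\ell)^{1+\eta_\ell}$, which automatically has degree exactly $n_x-1$ and makes $U^2 q_r/p_r$ a genuine polynomial — so all residues vanish trivially, and the contradiction is immediate with no floor-function bookkeeping. This is somewhat cleaner and arguably easier to verify. Your remaining steps (using Proposition \ref{xjdistinct} for the ``if'' direction and picking $u=(V^T)^{-1}e_j$, i.e.\ the Lagrange basis polynomials, to show $w_j>0$) coincide with the paper's.
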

\begin{proof}
We use the same notation as in Lemma \ref{Rationals} and set
$$
  S_{j}(z) = \frac{1}{\eta_j!}(z-x_j)^{1+\eta_j}\frac{q_r(z)}{p_r(z)}.
$$
We note that $S_j(z)$
is smooth and regular close to $z=x_j$.
Then by Lemma \ref{Rationals}, for $k\geq 0$,
$$
  a_{n_y-n_x+1+k} = \sum_{j=1}^{\tilde{n}}\lim_{z\to x_j}\frac{d^{\eta_j}}{dz^{\eta_j}}z^{k}S_j(z).
$$
Next, we let $\v=(v_1,\ldots,v_{n_x})^T$ be an arbitrary vector 
in $\Real^{n_x}$
and recall that $P_v(z)$ is the corresponding $n_x-1$ degree polynomial
$$
   P_v(z) = v_1 + v_2z + \cdots + v_{n_x}z^{n_x-1}.
$$
Then
\begin{align}\lbeq{raylexp}
  \v^T{A}_1R\v &= \sum_{j=1}^{n_x}\sum_{k=1}^{n_x}v_jv_ka_{n_y-n_x+j+k-1}
=
\sum_{j=1}^{n_x}\sum_{k=1}^{n_x}\sum_{\ell=1}^{\tilde{n}}\lim_{z\to x_\ell}\frac{d^{\eta_\ell}}{dz^{\eta_\ell}}z^{j+k-2}S_{\ell}(z)v_jv_k
\nonumber
\\
&=
\sum_{\ell=1}^{\tilde{n}}\lim_{z\to x_\ell}\frac{d^{\eta_\ell}}{dz^{\eta_\ell}}S_{\ell}(z)\sum_{j=1}^{n_x}\sum_{k=1}^{n_x}z^{j+k-2}v_jv_k
=\sum_{\ell=1}^{\tilde{n}}\lim_{z\to x_\ell}\frac{d^{\eta_\ell}}{dz^{\eta_\ell}}S_{\ell}(z)P_v(z)^2.
\end{align}
If
\be{sizecond}
\tilde{n}+
\sum_{j=1}^{\tilde{n}}\lfloor \eta_j/2\rfloor \leq n_x-1,
\ee
we can take
$$
   P_v(z) = (z-x_1)^{1+\tilde{\eta}_1}(z-x_2)^{1+\tilde{\eta}_2}\cdots
(z-x_{\tilde{n}})^{1+\tilde{\eta}_{\tilde{n}}},
\qquad
\tilde{\eta}_j = \lfloor \eta_j/2\rfloor.
$$
Since $2(1+\tilde{\eta}_\ell) = 2+2\lfloor \eta_\ell/2\rfloor\geq 2+2( \eta_\ell/2-1)> \eta_\ell$ and
$$
   \left.\left(\frac{d^\ell}{dz^\ell}f(z)(z-z^*)^k\right)\right|_{z=z^*} = 0, \qquad 0\leq \ell <k,
$$
for all smooth enough $f(z)$, we get
$\v^T{A}_1R\v=0$, which contradicts the positivity of $A_1R$. Hence,
$$
\tilde{n}+
\sum_{j=1}^{\tilde{n}}\lfloor \eta_j/2\rfloor > n_x-1
= \tilde{n} + \sum_{\ell=1}^{\tilde{n}}\eta_\ell -1.
$$
Since for any integer $n>0$ we have $\lfloor n/2\rfloor\leq n-1$
it follows that all $\eta_\ell=0$ and $\tilde{n}=n_x$. Hence, if $A_1R$ is positive
definite, then $\{x_j\}$ are distinct.

To show the theorem it is now enough to show that, when $\{x_j\}$ are distinct,
$A_1R$ is positive if and only if the weights are positive.  From \eq{raylexp}
we then have
$$
  \v^TA_1R\v 
=\sum_{\ell=1}^{n_x}S_{\ell}(x_\ell)P_v(x_\ell)^2
=\sum_{\ell=1}^{n_x}w_\ell P_v(x_\ell)^2.
$$
Clearly, when all $w_\ell>0$, this expression is positive for $\v\neq 0$,
and $A_1R$ is positive definite. To show the converse, 
 we take $P_v(z)$ to be the Lagrange basis polynomials
$L_j(z)$ of degree $n_x-1$ defined as
$$
   L_j(x_i) = \begin{cases} 1, & i=j,\\ 0, & i\neq j.\end{cases}.
$$
If $A_1R$ is positive then
$$
  0 < \v^TA_1R\v =
\sum_{\ell=1}^{n_x}w_{\ell}L_j(x_\ell)^2 = w_j.
$$
This can be done for each $j$, which concludes the proof.
\end{proof}

We can now relate our conclusions with those in
Markov's Theorem \ref{markov}. 
We consider all solutions to \eq{mom-olof2}, instead of
those given by the integral relation \eq{integrel2}
with a piecewise continuous function $f(x)$.
The extra condition \eq{Acond} is then automatically
satisfied, and we note that the positivity of $A_1R$
guarantees a unique solution also in our space of 
density functions \eq{fWe}.
We view this as a corollary of Theorems \ref{existence} and \ref{SPDresults}.
\begin{corollary} 
If there exists a solution to \eq{mom-olof2}, then the matrix in \eq{Acond}
is singular.
When $n_x=n_y$ there is a  
unique solution to \eq{mom-olof2} of the form \eq{interlace}
if and only if $A_1R$ is symmetric positive definite.
\end{corollary}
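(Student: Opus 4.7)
The plan is to handle the two claims of the corollary separately. The singularity of the matrix in \eq{Acond} will follow by extending the null-space identity $A\cb=0$ by one more linear equation, using Lemma~\ref{aexponent}; the equivalence between interlacing and positive-definiteness of $A_1 R$ will combine Theorem~\ref{SPDresults} with a partial-fraction / monotonicity argument applied to $q_r/p_r$.

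For the first claim, pick any polynomial solution $(p,q)$ of \eq{mom-olof2}, which exists by Theorem~\ref{existence}(i), and let $\cb=(1,c_1,\ldots,c_{n_x})^T$ be the coefficient vector of $p$. Lemma~\ref{aexponent} says $p(z)\,e^{m(z)}=q(z)$, and since $\deg q\le n_y$, Lemma~\ref{Taylor} forces the convolution identities
$$
\sum_{j=0}^{n_x} c_j\,a_{k-j}=0,\qquad k=n_y+1,\ldots,K+1.
$$
The first $n_x$ of these recover the relation $A\cb=0$ of Proposition~\ref{pqcsol}, and the additional relation at $k=K+1$ annihilates the extra row that distinguishes \eq{Acond} from $A$. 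Hence $\cb$, reordered to match the block layout of \eq{Acond}, is a non-zero element of the null space of that $(n_x+1)\times(n_x+1)$ matrix, which is therefore singular.

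Now assume $n_x=n_y=n$ and that $A_1R$ is symmetric positive definite. Then $A_1$ is invertible, so a unique solution exists by Theorem~\ref{existence}(v), while Theorem~\ref{SPDresults} guarantees that the $x_j$ are distinct real numbers and all weights $w_j>0$. Order the $x_j$ increasingly. Since $p_r$ and $q_r$ are monic of common degree $n$, partial fractions give
$$
h(z) \,:=\, \frac{q_r(z)}{p_r(z)} \,=\, 1+\sum_{j=1}^{n}\frac{w_j}{z-x_j}.
$$
Now $h(z)\to 1$ as $|z|\to\infty$, $h(z)\to\pm\infty$ as $z\to x_j^{\pm}$, and $h'(z)=-\sum_j w_j/(z-x_j)^2<0$ throughout its domain. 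A sign chase on $(-\infty,x_1),(x_1,x_2),\ldots,(x_{n-1},x_n),(x_n,\infty)$ then shows $h$ has exactly one zero in each of the first $n$ intervals and none in the last, producing real zeros $y_1<y_2<\cdots<y_n$ of $q_r$ that interlace the $x_j$ precisely in the order required by \eq{interlace}.

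Conversely, given a solution already of the form \eq{interlace}, the $x_j$ are distinct and the explicit product formula $w_j=\prod_k(x_j-y_k)/\prod_{k\ne j}(x_j-x_k)$ shows by a routine sign count that numerator and denominator both carry sign $(-1)^{n-j}$, so every $w_j>0$. Theorem~\ref{SPDresults} then gives that $A_1R$ is symmetric positive definite, and uniqueness follows again from Theorem~\ref{existence}(v). The main obstacle is the forward direction just above: upgrading ``$x_j$ distinct and $w_j>0$'' into genuine interlacing of the $y_j$ with the $x_j$. The monotonicity of $h$ together with its boundary behaviour at the poles and at infinity is exactly what does this upgrade.
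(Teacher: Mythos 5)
Your singularity argument and the ``only if'' direction (interlacing implies $A_1R$ SPD) follow the paper's own route: the convolution identity at index $K+1$ supplies the extra null-space relation beyond $A\cb=0$, and the explicit sign count on $w_j=q_r(x_j)/p_r'(x_j)$ shows the weights are positive, after which Theorem~\ref{SPDresults} and Theorem~\ref{existence}(v) finish. The ``if'' direction is where you genuinely diverge. The paper orders the $x_j$ increasingly, sets $m_k$ to be the number of $y_j$ below $x_k$, derives $\sign(w_k)=(-1)^{m_k+k}$, and then runs a purely combinatorial count to force $m_k=k$; that argument tacitly presupposes the $y_j$ are real, a fact Theorem~\ref{SPDresults} by itself does not assert. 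Your partial-fraction route circumvents this: writing $h(z)=q_r(z)/p_r(z)=1+\sum_j w_j/(z-x_j)$ with all $w_j>0$, the strict monotonicity of $h$ on each component of $\Real\setminus\{x_1,\dots,x_n\}$ together with the sign of $h$ at the poles and at $\pm\infty$ places exactly one real zero of $q_r$ in $(-\infty,x_1)$ and in each $(x_j,x_{j+1})$ for $j<n$, and none in $(x_n,\infty)$; since $\Deg(q_r)=n$ these account for all the zeros, so the $y_j$ are real and automatically interlaced in the required order (and the fact that $w_j>0$ already guarantees $q_r(x_j)\neq 0$, so no cancellation between numerator and denominator needs a separate appeal to Theorem~\ref{existence}(iii)). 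In short, you trade the paper's combinatorial counting argument for an analytic intermediate-value argument on the rational function $q_r/p_r$; both rest on the same backbone (Theorems~\ref{existence} and~\ref{SPDresults} and the residue/weight formula), but yours has the small advantage of establishing the realness of the $y_j$ rather than assuming it. The proof is correct.
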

\begin{proof}
We start by proving the singularity of \eq{Acond}.
By (ii) in Proposition \ref{pqcsol} a coefficient solution 
$\cb=(c_0,\ldots,c_{n_x})^T=(c_0,\cbar^T)^T$ 
satisfies $A\cb=0$. Since $A=(\a_0\ A_1)$
it remains to prove that $c_0a_{K+1}+\a_0^T\cbar=0$.
This was already proved in \eq{aKrel}.

Next, we prove the ``if'' part of the second statement.
If $A_1R$ is symmetric positive definite it is non-singular and
by (i), (iii) and (v) in Theorem \ref{existence} the minimal
degree solution exists and is unique and $x_j\neq y_i$ for all $i,j$. 
(If $x_j=0$ for some $j$, then there is no zero $y_i$-value
since $\Deg(q^*)=n$ by point (iii).)
By Theorem \ref{SPDresults}
the corresponding branch values $\{x_j\}$ are distinct. 
It remains to show that, upon some reordering, the
$\{x_j\}$ and $\{y_j\}$ are interlaced as in \eq{interlace}.

Order the $x_j$-values in an increasing sequence and let $m_k$ be
the number of $y_j$-values such that $y_j<x_k$. 
Clearly, $m_k$ is increasing and $0\leq m_k\leq n_y$.
Moreover, $\sign(q_r(x_k))=(-1)^{n_y-m_k}$ and
since $\lim_{z\to\infty}p'_r(z)>0$, we also have $\sign(p'_r(x_k))=(-1)^{n_x-k}$.
Hence, by also using the fact that $n_y=n_x$,
$$
  \sign(w_k) = (-1)^{n_y-m_k+n_x-k}=(-1)^{m_k+k}.
$$
We conclude that $m_k+k$ is even, which implies that $m_k$ is in fact
{\em strictly} increasing. Then, for $k=1,\ldots,n_x-1$, we have
$m_{k+1}\geq m_k+1$ and
$$
   n_x \geq m_{n_x} \geq m_k + n_x-k \quad \Rightarrow \quad m_k\leq k.
$$
Similarly, $m_k\geq m_1+ k-1\geq k-1$, so $k-1\leq m_k \leq k$, and therefore
$$
   2k-1\leq m_k+k \leq 2k.
$$
Finally, since $m_k+k$ is even we must have $m_k=k$,
which implies that the values are interlaced.

We now consider the ``only if'' part.
If there is a solution of the form \eq{interlace}, then
the $\{x_j\}$-values are obviously distinct 
and $m_k=k$. By Proposition \ref{xjdistinct}
the weights are then given by \eq{weights} and they are positive since,
as above, $\sign(w_k)=(-1)^{m_k+k}=1$. It follows from Theorem \ref{SPDresults} 
that $A_1R$ is positive definite.
\end{proof}


\section{Outlook}

Several interesting issues may be worth mentioning: 

\begin{enumerate}
\item Computational complexity in a finite difference implementation: one can consult the article \cite{GO} where practical implementation issues and several examples of increasing complexity have been addressed in the context of geometric optics problems. In particular, comparisons with Lagrangian (ray-tracing) solutions are shown.

\item Extension to higher dimensions: nothing seems to be done in this direction at the time being; see however the last sections of \cite{put} and the routines based on complex variables in \cite{GM,milan} for ``shape from moments".

\item A very special case of the trigonometric moment problem can be solved by means of a slight variation of the algorithms presented here, in \cite{GO} and in Section IV.A of \cite{milan}. That is to say, one tries to invert the following set of equations:
\begin{equation}
\label{trigo}
\sum_{j=0}^n \mu_j \exp(ik\lambda_j)=m_k, \qquad k=0, ..., n.
\end{equation}
Let us state that in case the $n+1$ real frequencies $\lambda_j$ are known, the set of complex amplitudes $\mu_j$ are found by solving a Vandermonde system:
$$
\left(\begin{array}{ccc}
1 & \cdots & 1 \\
\exp(i\lambda_0) & \cdots & \exp(i\lambda_n) \\
\vdots & & \vdots \\
\exp(in\lambda_0) & \cdots & \exp(in\lambda_n) \\
\end{array}\right)
\left(\begin{array}{c}\mu_0 \\ \mu_1 \\ \vdots \\ \mu_n \\ 
\end{array}\right)=\left(\begin{array}{c}
m_0 \\ m_1 \\ \vdots \\ m_n \\ \end{array}\right).
$$
The frequencies can be found through a byproduct of \cite{milan,GO} as we state now: let us suppose $n$ is odd ({\it i.e.} the number of equations is even), we form the two matrices,
$$
A_1=\left(\begin{array}{ccc}
m_0 & \cdots & m_{\frac{n-1}2} \\
\vdots & & \vdots \\
m_{\frac{n-1}2} & \cdots & m_{{n-1}} \\
\end{array}\right), \qquad 
A_2=\left(\begin{array}{ccc}
m_1 & \cdots & m_{\frac{n+1}2} \\
\vdots & & \vdots \\
m_{\frac{n+1}2} & \cdots & m_{{n}} \\
\end{array}\right),
$$
and then the frequencies can be obtained through a generalized eigenvalue problem, $A_1\v_j=\lambda_j A_2 \v_j$, $j=0, ..., n$. This kind of algorithm can be used to check the accuracy of the classical FFT and will be studied in a forthcoming article.

\end{enumerate}


\end{document}